\documentclass[a4paper,12pt,reqno]{amsart}
\usepackage{amsfonts, color}
\usepackage{amsmath}
\usepackage{amssymb}
\usepackage[a4paper]{geometry}
\usepackage{mathrsfs}
\usepackage[colorlinks]{hyperref}
\usepackage{comment}
\usepackage[colorinlistoftodos]{todonotes}

\usepackage{hyperref}
\renewcommand\eqref[1]{(\ref{#1})}

\makeatletter
\newcommand*{\mint}[1]{%
  % #1: overlay symbol
  \mint@l{#1}{}%
}
\newcommand*{\mint@l}[2]{%
  % #1: overlay symbol
  % #2: limits
  \@ifnextchar\limits{%
    \mint@l{#1}%
  }{%
    \@ifnextchar\nolimits{%
      \mint@l{#1}%
    }{%
      \@ifnextchar\displaylimits{%
        \mint@l{#1}%
      }{%
        \mint@s{#2}{#1}%
      }%
    }%
  }%
}
\newcommand*{\mint@s}[2]{%
  % #1: limits
  % #2: overlay symbol
  \@ifnextchar_{%
    \mint@sub{#1}{#2}%
  }{%
    \@ifnextchar^{%
      \mint@sup{#1}{#2}%
    }{%
      \mint@{#1}{#2}{}{}%
    }%
  }%
}
\def\mint@sub#1#2_#3{%
  \@ifnextchar^{%
    \mint@sub@sup{#1}{#2}{#3}%
  }{%
    \mint@{#1}{#2}{#3}{}%
  }%
}
\def\mint@sup#1#2^#3{%
  \@ifnextchar_{%
    \mint@sup@sub{#1}{#2}{#3}%
  }{%
    \mint@{#1}{#2}{}{#3}%
  }%
}
\def\mint@sub@sup#1#2#3^#4{%
  \mint@{#1}{#2}{#3}{#4}%
}
\def\mint@sup@sub#1#2#3_#4{%
  \mint@{#1}{#2}{#4}{#3}%
}
\newcommand*{\mint@}[4]{%
  % #1: \limits, \nolimits, \displaylimits
  % #2: overlay symbol: -, =, \ldots
  % #3: subscript
  % #4: superscript
  \mathop{}%
  \mkern-\thinmuskip
  \mathchoice{%
    \mint@@{#1}{#2}{#3}{#4}%
        \displaystyle\textstyle\scriptstyle
  }{%
    \mint@@{#1}{#2}{#3}{#4}%
        \textstyle\scriptstyle\scriptstyle
  }{%
    \mint@@{#1}{#2}{#3}{#4}%
        \scriptstyle\scriptscriptstyle\scriptscriptstyle
  }{%
    \mint@@{#1}{#2}{#3}{#4}%
        \scriptscriptstyle\scriptscriptstyle\scriptscriptstyle
  }%
  \mkern-\thinmuskip
  \int#1%
  \ifx\\#3\\\else_{#3}\fi
  \ifx\\#4\\\else^{#4}\fi
}
\newcommand*{\mint@@}[7]{%
  % #1: limits
  % #2: overlay symbol
  % #3: subscript
  % #4: superscript
  % #5: math style
  % #6: math style for overlay symbol
  % #7: math style for subscript/superscript
  \begingroup
    \sbox0{$#5\int\m@th$}%
    \sbox2{$#5\int_{}\m@th$}%
    \dimen2=\wd0 %
    % => \dimen2 = width of \int
    \let\mint@limits=#1\relax
    \ifx\mint@limits\relax
      \sbox4{$#5\int_{\kern1sp}^{\kern1sp}\m@th$}%
      \ifdim\wd4>\wd2 %
        \let\mint@limits=\nolimits
      \else
        \let\mint@limits=\limits
      \fi
    \fi
    \ifx\mint@limits\displaylimits
      \ifx#5\displaystyle
        \let\mint@limits=\limits
      \fi
    \fi
    \ifx\mint@limits\limits
      \sbox0{$#7#3\m@th$}%
      \sbox2{$#7#4\m@th$}%
      \ifdim\wd0>\dimen2 %
        \dimen2=\wd0 %
      \fi
      \ifdim\wd2>\dimen2 %
        \dimen2=\wd2 %
      \fi
    \fi
    \rlap{%
      $#5%
        \vcenter{%
          \hbox to\dimen2{%
            \hss
            $#6{#2}\m@th$%
            \hss
          }%
        }%
      $%
    }%
  \endgroup
}
 %Need with hyperref
%
%
%%%%%%%%%%%%%%%%%%%%%%%%%
\setlength{\textwidth}{15.2cm}
\setlength{\textheight}{22.7cm}
\setlength{\topmargin}{0mm}
\setlength{\oddsidemargin}{3mm}
\setlength{\evensidemargin}{3mm}
\setlength{\footskip}{1cm}

%%%%%%%%%%%%%%%%%%%%%%%%%%%

\numberwithin{equation}{section}
\theoremstyle{plain}
\newtheorem{thm}{Theorem}[section]
\newtheorem{prop}[thm]{Proposition}
\newtheorem{cor}[thm]{Corollary}
\newtheorem{lem}[thm]{Lemma}

\theoremstyle{definition}

\newtheorem{rem}[thm]{Remark}

\newcommand{\ov}{\overline}

\newcommand{\cl}{{C \kern -0.1em \ell}}
\newcommand{\R}{\mathbb{R}}
\newcommand{\BR}{\mathbb{R}}

\newcommand{\cH}{\mathcal{H}}
\newcommand{\B}{B_{1}^{c}}
%%%%%%%%%%%%%%%%%%%%%%%%%%%%%

\newcommand{\llangle}{\langle \kern -0.2em \langle}	
\newcommand{\rrangle}{\rangle \kern -0.2em \rangle}

\newcommand{\inner}[1]{\left\langle  #1 \right\rangle } 
\newcommand{\rpart}[1]{\left[  #1 \right]_0 }

%%%%%%%%%%%%%%%%%%%%%%%%%%%%%

\title[Zero modes and  Dirac-(logarithmic) Sobolev-type inequalities]{Zero modes and Dirac-(logarithmic) Sobolev-type inequalities}
\author[M. Chatzakou]{Marianna Chatzakou}
\address{
	Marianna Chatzakou:
	\endgraf
    Department of Mathematics: Analysis, Logic and Discrete Mathematics
    \endgraf
    Ghent University, Belgium
  	\endgraf
	{\it E-mail address} {\rm marianna.chatzakou@ugent.be}
		}
	
\author[U. K\"ahler]{Uwe K\"ahler}
\address{
  Uwe Kaehler:
  \endgraf
 Center for research and development in mathematics and applications
 \endgraf
 Department of Mathematics
  \endgraf
  University of Aveiro
  \endgraf
  3810-193 Aveiro
  \endgraf
  Portugal
  \endgraf
	{\it E-mail address}  {\rm ukaehler@ua.pt}}

\author[M. Ruzhansky]{Michael Ruzhansky}
\address{
  Michael Ruzhansky:
  \endgraf
  Department of Mathematics: Analysis, Logic and Discrete Mathematics
  \endgraf
  Ghent University, Belgium
  \endgraf
 and
  \endgraf
  School of Mathematical Sciences
  \endgraf
  Queen Mary University of London
  \endgraf
  United Kingdom
  \endgraf
  {\it E-mail address} {\rm michael.ruzhansky@ugent.be}
  }

\begin{document}

\thanks{M. Chatzakou and M. Ruzhansky are supported by the FWO Odysseus 1 grant G.0H94.18N: Analysis and Partial Differential Equations and by the Methusalem programme of the Ghent University Special Research Fund (BOF) (Grant number 01M01021). M. Chatzakou is a postdoctoral fellow of
the Research Foundation-Flanders (FWO) under the postdoctoral grant No 12B1223N. U. K\"ahler was supported by a grant for a scientific stay in Flanders of FWO Flanders under the number
V500824N. U. K\"ahler was also (partially) supported by CIDMA and is funded by the Funda\c{c}\~ao para
a Ci\^encia e a Tecnologia, I.P. (FCT, Funder ID = 50110000187). M. Ruzhansky is also supported by EPSRC grant EP/V005529/1 and by FWO Senior Research Grant G022821N. \\
\indent
{\it Keywords:} zero modes, Dirac operator, Clifford-valued functions, $L^p$-$L^q$ Dirac-Sobolev inequalities, explicit constants, Gaussian measure, Poincar\'e inequality, Nash inequality, explicit constant.}

\begin{abstract} 
We study the decay rate of the zero modes of the Dirac operator with a matrix-valued potential that is considered here without any regularity assumptions, compared to the existing literature. For the Dirac operator and for Clifford-valued functions we prove the $L^p$-$L^2$ Dirac Sobolev inequality with explicit constant, as well as the $L^p$-$L^q$ Dirac-Sobolev inequalities. We prove its logarithmic counterpart for $q=2$, extending it to its Gaussian version of Gross, as well as show Nash and  Poincar\'e inequalities in this setting, with explicit values for constants. 
\end{abstract}

\maketitle

\section{Introduction}

  The main motivation of this work is the study of the decay of the zero modes,  i.e., the eigenfunctions corresponding to the zero eigenvalue,   of the Dirac operator with a matrix-valued  potential, that is of an operator of the form
\begin{equation}\label{def:dirac}
D+Q=\sum_{j=1}^{n}e_j \partial_{x_j}+Q\,,
\end{equation}
where $\{e_1,\ldots,e_n \}$ stands for the standard basis of the Euclidean space $\R^n$,  $n\geq 1$.  Roughly speaking,  this problem is the mathematical analogue of the physical problem of the stability of matter.  This is a classical problem in physics for several decades, cf. \cite{LS10} for a recent overview of the topic, that  has been a major topic of study for many authors since the 60's.  While giving a complete list of related works would be impossible,  we can refer the interested reader to the works \cite{AMN00}, \cite{PSS18}, \cite{BES08} which offer an  overview of the topic, with an emphasis on the mathematical consideration of it.   

The zero modes for the above problem are, vaguely speaking, the lower estimates of the energy of a system of particles.  It is known,  see \cite{BE02},  that in the classical case,  zero modes are rare   and their decay rate,  whenever they exist,  is well studied.  

In the current consideration, the classical results are studied in  the case where the differential operator is the Dirac operator as in \eqref{def:dirac}, while the functions that we consider here, and so also the zero modes, are Clifford-valued. Although in particle physics the functions to be considered are spinor-valued, it is much easier and common to work with Clifford-valued functions as a larger set. The spinor-valued results then follow as a special case. In the special case of Minkowski space-time the space of functions can even be enlarged to  the case of 2 by 2 complex matrix-valued functions. However, in the latter case, getting  a physical interpretation of the results is quite difficult. 

While the possibility of the existence of the zero mode is not within the scope of the current work,  here we manage to sharpen the results of \cite{BES08} by using different methods,  not only by showing higher decay rate of the zero modes, but also by removing the regularity assumptions on $Q$.  Zero modes in this setting when the regarded Dirac operator appears with a ``twist'', in which case it is also called the ``Pauli operator'',  have also been studied in \cite{BEU11}.  In the same work,  the authors also establish Dirac-Sobolev and Dirac-Hardy inequalities, but the analysis there is with respect to the weak $L^p$-spaces of functions. Furthermore, we would like to mention~\cite{DELV04,DECV07} where the authors studied  Hardy-type inequalities with respect to the operator (\ref{def:dirac}) aiming for min-max characterizations of the discrete spectrum.

The analysis of Dirac-Sobolev embeddings and of the Dirac-Sobolev inequalities has been the topic of several works; e.g. we refer to the work \cite{IS10} where  the authors  study the relationship of the $L^p$-Dirac-Sobolev spaces with the classical Sobolev spaces in an open domain in $\R^3$,  while in \cite{IS10, BR07} functional inequalities that involve the Dirac operator and Dirac-type operators have been proved.  Another work in this direction is \cite{ST13} where the authors prove the critical Dirac-Sobolev inequality,  giving also applications to isoperimetric problems.  Dirac-Sobolev inequalities,  but in settings different than the Euclidean one,  have also been considered,  cf. the work \cite{R09} on  closed compact Riemannian spin manifolds.  

Despite the amount of works around the  Dirac-Sobolev embeddings in the above sense,   the classical $L^p$-$L^2$ Dirac-Sobolev inequality with explicit constant for Clifford-valued function was still an open problem. Here we prove the latter inequality with an explicit constant, as well as the general $L^p$-$L^q$ Dirac-Sobolev inequalities in this setting. 

Having as a starting point the Dirac-Sobolev inequality,  we prove  other functional inequalities, for Clifford-valued functions and with respect to the Dirac operator $D$, that are also very important especially in view of their applications to partial differential equations.  These inequalities include: (i) the logarithmic-Sobolev inequality; (ii) the Nash inequality; (iii) the Gaussian  logarithmic-Sobolev inequality,  and (iv) the Poincar\'e inequality.

An overview of the structure of the paper is as follows: in Section \ref{SEC:pre} we give a short exposition of the notions of Clifford algebras and Clifford-Hilbert modules.  We recall some known classical functional inequalities in the setting of Clifford-valued functions,  and define the Dirac-Sobolev spaces in the above setting.  In Sections \ref{SEC:sob.ineq} and  \ref{SEC:log.SobandPoi} we prove the aforesaid inequalities.  In the last section,  Section \ref{SEC:zero},  we obtain $L^p$-estimates for the zero modes of the Dirac operator as in \eqref{def:dirac},  improving the results obtained in \cite{BES08}. 

Finally, let us mention that even though for the functional inequalities in Sections \ref{SEC:sob.ineq} and  \ref{SEC:log.SobandPoi} the results are stated for functions in $\mathbb{R}_{0,n}$ they are actually true for Clifford-valued functions in $\mathbb{C}_n$ after obvious modifications in the corresponding proofs. The range of applications of the results obtained here lies in the context of shallow-water waves (cf. \cite{Wu99}), Hodge-Dirac operator (cf. \cite{MM18}), or Atiyah-Singer-Dirac operators (cf. \cite{BMR18}).

\section{Preliminaries}\label{SEC:pre}

\subsection{Clifford algebras}
Let $\{ e_1, \cdots, e_n\}$ be the standard basis of the Euclidean vector space in $\BR^n$. The associated Clifford algebra $\BR_{0,n}$ is the free algebra generated by $\BR^n$ modulo $x^2 = - \sum_{i=1}^{n} x_i^2$. The defining relation induces the multiplication rules $e^2_i = -1$ for $i=1, \cdots, n,$ and $e_i e_j + e_j e_i = 0, i\not=j$.\smallskip

A vector space basis for $\BR_{0,n}$ is given by the set
\begin{equation} \label{Eq:01}
\left\{e_\emptyset=1, e_A  = e_{l_1} e_{l_2} \ldots e_{l_r}: ~A = \{ l_1, l_2, \ldots, l_r \}, 1 \leq l_1 < \ldots < l_r \leq n\right\}.
\end{equation}
Each $a \in \BR_{0,n}$ can be written in the form $a=\sum_A a_A ~e_A$, with $a_A \in \BR.$ Moreover, each element $a=\sum_A a_A ~e_A$ decomposes into $k$-blades $[a]_k := \sum_{A: \# A = k} a_A ~e_A$ with $a=\sum_{k=0}^n [a]_k,$ and we write $\BR_{0,n} = \cup_{k=0}^n ~ \BR_{0,n}^k,$ where $\BR_{0,n}^k := \{ [a]_k, a \in \BR_{0,n} \}.$\smallskip

We will also consider the complexified Clifford algebra $\mathbb{C}_n=\mathbb{R}_{0,n}\otimes\mathbb{C}$. This notation is due to the fact that a complex Clifford algebra has no signature, i.e. the algebras $\mathbb{R}_{0,n}\otimes\mathbb{C}$ and $\mathbb{R}_{1,n-1}\otimes\mathbb{C}$ are the same. 

The conjugation in the Clifford algebras $\BR_{0,n}$ and $\mathbb{C}_n$ is defined as the automorphism $x \mapsto \ov{a} =\sum_A  \ov{a_A} ~\ov{e}_A,$ where $\ov{e}_\emptyset =1,  \ov{e}_j =-e_j (j=1,\ldots,{n}),$ $\ov{e}_A =\ov{e}_{l_r} ~\ov{e}_{l_{r-1}} \ldots \ov{e}_{l_1}$, and $\overline{a_A}$ is the complex conjugate of $a_A$. In particular, we get $\overline{a}=(-1)^{k(k+1)/2}a$ for $a\in\mathbb{R}_{0,n}^k$. An important property is $e_A\overline{e_A}=\overline{e_A}e_A=1$ for all $A$.

A norm in the Clifford algebras $\BR_{0,n}$ and $\mathbb{C}_n$ is given by $|a|^2=\sum_A |a_A|^2$ for any $a\in\mathbb{R}_{0,n}$ or any $a\in\mathbb{C}_{n}$. Hereby, we have
$$
|a|^2=[a\overline{a}]_0 \mbox{ for all }a\in\BR_{0,n} \mbox{ or }a\in\mathbb{C}_{n}.
$$

For a vector $x=\sum_{j=1}^n x_j e_j \in \BR^n \mbox{ or }\mathbb{C}^n$ we have $x \overline x = | x|^2 :=\sum_{j=1}^n|x_j |^2.$ Hence, each non-zero vector $x=\sum_{j=1}^n x_j e_j$ with $|x|\neq 0$ has an unique multiplicative inverse given by $x^{-1} = \frac{\ov x}{| x|^2}=-\frac{x}{| x|^2}.$ Up to the minus sign this inverse corresponds to the Kelvin inverse of the vector and geometrically represents a reflection on the unit sphere in $\BR^n$. 

In what follows we will concentrate on the case of $\mathbb{R}_{0,n}$ but all results easily carry over into the case of $\mathbb{C}_n$.

Additionally, we have the inequality $|ab|\leq K_n |a||b|$ for $a,b\in\BR_{0,n}$ (\cite{Hile90}) where the sharp constant is given by
\begin{equation}
    \label{Kn}
    K_n=\left\{\begin{array}{ll} 2^{n/4} & \mbox{if $n=0,6 \mod 8$}\\
2^{(n-1)/4} & \mbox{if $n=1,3,5 \mod 8$}\\
2^{(n-2)/4} & \mbox{if $n=2,4 \mod 8$}\\
2^{(n+1)/4} & \mbox{if $n=7 \mod 8$}\,.\end{array}\right.
\end{equation}

Similarly, for the case of the complexified Clifford algebra $\mathbb{C}_n$ we have
\begin{equation}
    \label{Knc}
K_n=\left\{\begin{array}{ll} 2^{n/4} & n \mbox{ even }\\
2^{(n+1)/4} & n \mbox{ odd }\,.\end{array}\right.
\end{equation}
In the special case that either $a$ or $b$ is a vector, we have the inequality $|ab|\leq |a||b|$. For details, we refer to~\cite{Hile90}.

An $\BR_{0,n}$-valued function $f$ over a non-empty domain $\Omega \subset \BR^{n}$ is written as  $f=\sum_A f_A e_A$, with components $f_A:\Omega \rightarrow \BR$. Properties such as continuity are to be understood component-wisely. For example, the function $f$ as above is continuous if and only if all components $f_A$ are continuous. Finally, we recall the Dirac operator $$D=\sum_{j=1}^{n} e_j ~\partial_{x_j},$$ 
which factorizes the Laplacian, i.e., $D^2=- \Delta = -\sum_{j=1}^n \partial_{x_j}^2$. 

A $\BR_{0,n}$-valued function $f$ is said to be  {\it left-monogenic} if it satisfies $Df=0$ on $\Omega$ (resp. \textit{right-monogenic} if it satisfies $fD=0$ on $\Omega$).

For the special case of the algebra $\mathbb{C}_4$ in~\cite{BES08}, the complexified space-time algebra (often also somewhat misleadingly denoted by $\mathbb{R}_{1,3}\otimes\mathbb{C}$ in papers coming from physics), and in many other papers the Dirac operator is represented by
$$
D=\alpha\cdot (-i\nabla), 
$$
where $\alpha$ denotes the triple of Dirac matrices
$$
\alpha_j=\left(\begin{array}{cc}
0     &  \sigma_j \\
 \sigma_j & 0
\end{array}\right)
$$
with $\sigma_j$ being the Pauli matrices and $-i\nabla=\left(-i\partial_{x_1},-i\partial_{x_2}, -i\partial_{x_3}\right)$. While this is a rather popular way of writing the Dirac operator in physics, from a purely mathematical point of view it is mixing Clifford algebra representations with matrix representations and, furthermore, complicates the notation for higher dimension. Therefore, we will stick to the purely Clifford algebra notation in this paper, so that our results apply not only to $\R^3$ but also to $\R^n$ for any $n$. 

Furthermore, we have the Cauchy kernel 
\begin{equation}\label{EQ:k}        
k(x)=-\frac{1}{\omega_n}\frac{\overline{x}}{|x|^n},
\end{equation}
where $\omega_n=\frac{\Gamma\left(\frac{n}{2} \right)}{\pi^{n/2}}$, denotes the surface area of the $n$-dimensional unit ball, $n\geq 2$, see e.g. \cite{BDS82}. This kernel satisfies
$$
Dk=\delta,
$$
with $\delta$ being the delta-distribution. This gives rise to the so-called Teodorescu transform
\begin{equation}\label{EQ:Te}  
Tf(x)=\int_{\mathbb{R}^n} k(x-y)f(y)dy
\end{equation}
with $DT=I$. 

\subsection{Clifford-Hilbert modules}

A right (unitary) module over $\BR_{0,n}$ is a vector space $V$ together with an algebra morphism $R: \BR_{0,n} \to \mathrm{End}(V)$, or to say it more explicitly, there exists a linear transformation (also called right multiplication) $R(a)$ of $V$ such that
\begin{equation} \label{Eq:02}
R(ab+c)=R(b)R(a)+R(c),
\end{equation}
for all $a\in\BR_{0,n}$, and where $R(1)$ is the identity operator. {We consider in $V$ the right multiplication defined by 
\begin{equation} \label{Eq:03}
R(a)v =v a, \quad v \in V, a \in \BR_{0,n}.
\end{equation} In particular, if $V$ denotes a function space, the product (\ref{Eq:03}) is defined by point-wise multiplication.}
%We consider in $V$ the right multiplication defined by point-wise multiplication
%\begin{equation} \label{Eq:03}
%(R(a)f)(x)=f(x) ~a.
%\end{equation}
We say that $V$ is a right Banach $\BR_{0,n}$-module if
\begin{itemize}
\item $V$ is a right $\BR_{0,n}$-module;
\item $V$ is a real Banach space;
\item there exists $C>0$ such that for any $a\in\BR_{0,n}$ and $x\in V$ it holds that
\begin{equation}\label{CBM}
\|x a\|_{V}\leq C~|a|\|x\|_{V},\quad  \text{{where }} |a|^2 := \sum_A |a_A|^2.
\end{equation}
\end{itemize} In particular, we have $\|x a\|_{V}=|a|\|x\|_{V}$ if $a\in\BR$. These considerations give rise to the adequate right modules of $\BR_{0,n}$-valued functions defined over any suitable subset $\Omega$ of $\BR^{n}.$ Of course, by similar reasoning one can define adequate left modules of $\BR_{0,n}$-valued functions.\smallskip

Consider $\cH$ to be a real Hilbert space. Then $V:=\cH \otimes \BR_{0,n}$ defines a right-Clifford-Hilbert module (rHm for short). Indeed, the inner product $\langle\cdot,\cdot\rangle_{\cH}$ in $\cH$ gives rise to two inner products in $V$:
\begin{equation}\label{Eq:05}
\langle x,y\rangle_{\mathcal{H}} := \sum_{A,B} \langle x_A,y_B\rangle_{\cH} \overline{e}_A e_B \quad{\rm and}\quad \inner{x,y}  :=  \sum_A  \langle x_A,y_B\rangle_{\cH} = \rpart{ \langle x,y \rangle_{\mathcal{H}} }.
\end{equation}
We remark that while only the second inner product gives rise to a norm (in the classic sense),  the first provides a generalization of Riesz' representation theorem in the sense that a linear functional $\phi$ is continuous if and only if it can be represented by an element $f_\phi\in V$ such that $\phi(g)=\langle f_\phi,g\rangle_{\mathcal{H}}.$ Furthermore, a mapping {$K : V \rightarrow W$} between two right-Clifford-Hilbert modules $V$ and $W$ is called a $\BR_{0,n}$-linear mapping if
$K(fa+g)=K(f)a+K(g),$ {where $f, g \in V, a \in \BR_{0,n}.$} For more details we refer to~\cite{BDS82}. \\

We give some important inequalities involving the sesquilinear form and the norm coming from the complex-valued inner product:
\begin{itemize}
\item  $| \inner{f,g} | \leq K_n\|f\|_{L^p(\Omega,\BR_{0,n})} \|g\|_{L^q(\Omega,\BR_{0,n})}$ with $\frac{1}{p}+\frac{1}{q}=1$ (H\"older inequality);
\item $\|a f\|_{L^2(\Omega,\BR_{0,n})}\leq K_n |a|\|f\|_{L^2(\Omega,\BR_{0,n})}$ for all $a\in\BR_{0,n}$, while $\|a f\|_{L^2(\Omega,\BR_{0,n})}=|a|\|f\|_{L^2(\Omega,\BR_{0,n})}$ whenever $a$ is a scalar or a vector;
\item $\|f\|_{L^2(\Omega,\BR_{0,n})}\leq | \inner{f,f}|^{1/2}\leq K_n \|f\|_{L^2(\Omega,\BR_{0,n})}$;
\item $\|f\|_{L^2(\Omega,\BR_{0,n})}\leq \sup_{\|g\|_{L^2(\Omega,\BR_{0,n})} \leq 1}|\inner{f,g}|\leq K_n\|f\|_{L^2(\Omega,\BR_{0,n})}$,
\end{itemize}
where the constant $K_n$ is given by \eqref{Kn}.  
Many facts from classic Hilbert spaces carry over to the notion of a Clifford-Hilbert module. Let us also point out that exactly the same formulae hold for functions in $L^p(\Omega, \mathbb{C}_n)$, with the value of $K_n$ coming from \eqref{Knc}.

In the same way we can introduce $\mathcal{S}$ as the corresponding Schwartz space of rapidly decaying functions. Its dual space $\mathcal{S}^\prime$  given by the continuous linear functionals is the space of tempered distributions.  Again, this can be either defined component-wise or via the sesquilinear form 
$\langle f,g \rangle_{L^{2}}$, but the space $\mathcal{S}^\prime$ is again considered as a right module. Let us remark that, strictly speaking, if we consider $\mathcal{S}$ as a Fr\'echet right module, then its algebraic dual $\mathcal{S}^\prime$ is the space of all left-$\mathbb{R}_{0,n}$-linear functionals over $\mathcal{S}$, which can also be identified with elements of a right-linear module by means of the standard anti-automorphism in the above mentioned way.
 
For more details we refer to the classic books~\cite{BDS82,DSS92}.

We also need the so-called Dirac-Sobolev spaces $W_p^k(\Omega,\BR_{0,n})$ that were discussed for $n=3$ in~\cite{IS10}. These spaces are defined to be the completion of $C^\infty(\Omega,\BR_{0,n})$ under the norm
$$
\|f\|_{W_p^k(\Omega, \R_{0,n})}=\left(\|f\|_{L^p(\Omega,\BR_{0,n})}^p+\sum_{m=1}^k\|D^mf\|_{L^p(\Omega,\BR_{0,n})}^p\right)^{1/p}.
$$
For the case of $k=1$ in~\cite{IS10} it was proved that for $1<p<\infty$ these norms are equivalent to the norms  
$$
\|f\|_{W_p^k(\Omega, \R_{0,n})}=\left(\|f\|_{L^p(\Omega,\BR_{0,n})}^p+\sum_{m=1}^k\|\nabla^m f\|_{L^p(\Omega,\BR_{0,n})}^p\right)^{1/p}\,,
$$
where $\nabla$ denotes the gradient. For the case $p=1$ it was shown that the norms are not equivalent. This can be easily extended for the case of $k>1$. Furthermore, combining this results with the result from~\cite{Stein70} (Theorem 3 on page 135) we have that the norm of the Dirac-Sobolev space $W_p^k(\Omega,\BR_{0,n})$ above is equivalent to the norm of the corresponding Bessel potential space for any positive integer $k$ in the case $1<p<\infty$, but not for $p=1$ or $p=\infty$. 

%To simplify our notation we will write shortly $L_p(\Omega)$ and $W_p^k(\Omega)$ instead of $L_p(\Omega,\BR_{0,n})$ and $W_p^k(\Omega,\BR_{0,n})$ in the future. 

%\textcolor{blue}{Questions for Uwe (as far as I remember):
%\begin{enumerate}
 %   \item Fractional Dirac-Sobolev spaces
  %  \item Heat kernels
   % \item  Young's inequality 
    %\item  Interpolation
    %\item Maybe to check the constant in the end of the proof of the Poincar\'e inequality
    %\item Do we have Sobolev embeddings for the Dirac Sobolev spaces?
%\end{enumerate}}
\section{Sobolev inequalities}\label{SEC:sob.ineq}
From now on we will consider spaces of the type $L^p(\BR^n,\mathbb{R}_{0,m})$ and $L^p(\BR^n,\mathbb{C}_{m})$ with $m\geq n$. It is useful to allow for different values of $n$ and $m$ in view of our applications to zero modes of the Dirac operator in the last section (where we have $m=n+1$).

For the Hardy-Littlewood-Sobolev inequality we have
\begin{thm}\label{THM:Sobolev1}
Let $1<p,q<\infty$, $1/p+1/q+\lambda/n=2$ and $0<\lambda<n$. Let $m\geq n\geq 1$. Then for any $f\in L^p(\BR^n,\mathbb{R}_{0,m}) ,g\in L^q(\BR^n, \R_{0,m})$ there exists a constant $C_{p,\lambda,n,m}>0$ such that
$$
\left| \int_{\BR^n}\int_{\BR^n} \overline{f(x)}\frac{1}{|x-y|^\lambda}g(y)dxdy\right|\leq C_{p,\lambda,n,m}\|f\|_{L^p(\BR^n, \R_{0,m})}\|g\|_{L^q(\BR^n, \R_{0,m})}.
$$
The same result holds for $f\in L^p(\BR^n,\mathbb{C}_{m})$ and $g\in L^q(\BR^n, \mathbb{C}_{m})$.
\end{thm}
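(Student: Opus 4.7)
The plan is to reduce the Clifford-valued Hardy--Littlewood--Sobolev inequality to the classical scalar one by decomposing $f$ and $g$ into their basis components. Write $f=\sum_A f_A e_A$ and $g=\sum_B g_B e_B$ with real-valued coefficient functions $f_A,g_B$. Then
\[
\overline{f(x)}\,\frac{1}{|x-y|^\lambda}\,g(y)=\sum_{A,B}\overline{e_A}e_B\,\frac{f_A(x)g_B(y)}{|x-y|^\lambda},
\]
and by linearity the double integral splits as a finite sum (of cardinality $2^m\cdot 2^m$) of scalar double integrals, each multiplied by the Clifford constant $\overline{e_A}e_B$.

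Next, I would apply the triangle inequality in $\mathbb{R}_{0,m}$ together with the fact that $|\overline{e_A}e_B|=1$ (products of basis elements equal another basis element up to sign) to obtain
\[
\left|\int_{\mathbb{R}^n}\!\!\int_{\mathbb{R}^n}\overline{f(x)}\,\frac{g(y)}{|x-y|^\lambda}\,dx\,dy\right|\le\sum_{A,B}\left|\int_{\mathbb{R}^n}\!\!\int_{\mathbb{R}^n}\frac{f_A(x)g_B(y)}{|x-y|^\lambda}\,dx\,dy\right|.
\]
Each summand is now a genuine scalar integral to which the classical Hardy--Littlewood--Sobolev inequality (valid under the hypotheses $1<p,q<\infty$, $1/p+1/q+\lambda/n=2$, $0<\lambda<n$) applies, yielding a bound $C_{p,\lambda,n}\|f_A\|_{L^p(\mathbb{R}^n)}\|g_B\|_{L^q(\mathbb{R}^n)}$.

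Finally, the pointwise identity $|f(x)|^2=\sum_A |f_A(x)|^2$ forces $|f_A(x)|\le|f(x)|$ for every $A$, hence $\|f_A\|_{L^p(\mathbb{R}^n)}\le\|f\|_{L^p(\mathbb{R}^n,\mathbb{R}_{0,m})}$ and analogously for $g_B$. Summing the $2^m\cdot 2^m$ contributions and taking the product of the sums gives
\[
\left|\int_{\mathbb{R}^n}\!\!\int_{\mathbb{R}^n}\overline{f(x)}\,\frac{g(y)}{|x-y|^\lambda}\,dx\,dy\right|\le 4^m\,C_{p,\lambda,n}\,\|f\|_{L^p(\mathbb{R}^n,\mathbb{R}_{0,m})}\,\|g\|_{L^q(\mathbb{R}^n,\mathbb{R}_{0,m})},
\]
so one may take $C_{p,\lambda,n,m}:=4^m C_{p,\lambda,n}$. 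The argument is essentially bookkeeping and there is no real obstacle; the only point worth care is the factor collected from the Clifford multiplication table, for which bounding $|\overline{e_A}e_B|$ by $1$ rather than invoking the coarser $K_n$ from~\eqref{Kn} is what keeps the constant clean. The same reasoning, using \eqref{Knc} and the analogous decomposition over the basis of $\mathbb{C}_m$, establishes the complex-Clifford version verbatim.
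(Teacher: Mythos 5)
Your proof is correct, but it takes a genuinely different route from the paper. Instead of decomposing into basis components, the paper applies the sharp submultiplicativity estimate $|ab|\le K_m|a||b|$ directly to the Clifford product $\overline{f(x)}\,|x-y|^{-\lambda}\,g(y)$, obtaining
\[
\left|\int\!\!\int \overline{f(x)}\frac{g(y)}{|x-y|^\lambda}\,dx\,dy\right|\le K_m\int\!\!\int |f(x)|\frac{|g(y)|}{|x-y|^\lambda}\,dx\,dy,
\]
and then invokes scalar HLS once, applied to the scalar functions $|f|$ and $|g|$, giving $C_{p,\lambda,n,m}=K_m C_{p,\lambda,n}$. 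Your component-wise decomposition avoids invoking $K_m$ at the level of the product estimate, but you then pay a factor $2^m$ for each of the two sums $\sum_A\|f_A\|_p\le 2^m\|f\|_p$ and $\sum_B\|g_B\|_q\le 2^m\|g\|_q$, landing at $4^m C_{p,\lambda,n}$. Since $K_m\le 2^{(m+1)/4}$ by \eqref{Kn}, the paper's constant is exponentially smaller, so your closing remark that bounding $|\overline{e_A}e_B|$ by $1$ ``keeps the constant clean'' is misleading: the loss is not in the product estimate but in the double sum over $4^m$ pairs $(A,B)$, and the paper's direct argument avoids that sum entirely. Both proofs are valid as existence statements, but if one cares about the constant (which the paper does in its later applications, e.g.\ Theorem~\ref{THM:HLS-e} and the explicit Sobolev constant in Theorem~\ref{THM:Sobolev2}), the direct $K_m$ route is strictly better.
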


\begin{proof}
Under the conditions of the theorem we have 
$$
\left| \int_{\BR^n}\int_{\BR^n} \overline{f(x)}\frac{1}{|x-y|^\lambda}g(y)dxdy\right|\leq K_m \int_{\BR^n}\int_{\BR^n} |\overline{f(x)}| \frac{1}{|x-y|^\lambda}|g(y)|dxdy\,,
$$
where we used the inequality $|ab|\leq K_m|a||b|$ ($a,b\in\mathbb{R}_{0,m}$) for Clifford-valued functions:
$$
\left|\overline{f(x)}\frac{1}{|x-y|^\lambda}g(y)\right|\leq K_m\left|\overline{f(x)}\right|\left|\frac{1}{|x-y|^\lambda}g(y)\right|\leq K_m\left|\overline{f(x)}\right|\frac{1}{|x-y|^\lambda}\left|g(y)\right|.
$$
Now, we apply the classic Hardy-Littlewood-Sobolev inequality~\cite{HL28,Sob38}
$$
\int_{\BR^n}\int_{\BR^n} |\overline{f(x)}| \frac{1}{|x-y|^\lambda}|g(y)|dxdy\leq C_{p,\lambda,n,m}\|f\|_{L^p}\|g\|_{L^q}
$$
to obtain our theorem. 
\end{proof}

The above theorem allows us to get a Hardy-Littlewood-Sobolev inequality for the Cauchy kernel instead of the fractional Riesz kernel by simply using $\left|\frac{\overline{x-y}}{|x-y|^n}\right|=\frac{1}{|x-y|^{n-1}}$. Furthermore, in the case of $n=1$, we have the classic H\"older inequality for Clifford-valued functions. 
\begin{cor}
Let $1<p,q<\infty$, $1/p+1/q-1/n=1$. Let $m\geq n\geq 1$. Then for any $f\in L^p(\BR^n,\R_{0,m}) ,g\in L^q(\BR^n,\R_{0,m})$ there exists a constant $C_{p,n-1,n,m}>0$ such that
$$
\left| \int_{\BR^n}\int_{\BR^n} \overline{f(x)}\frac{\overline{x-y}}{|x-y|^n}g(y)dxdy\right|\leq C_{p,n-1,n,m}\|f\|_{L^p(\R^n, \R_{0,m})}\|g\|_{L^q(\R^n,\R_{0,m})}\,,
$$
where $C_{p,n-1,n,m}>0$ is as in Theorem \ref{THM:Sobolev1} for $\lambda=n-1$. The same result holds also for $f\in L^p(\BR^n,\mathbb{C}_{m})$ and $g\in L^q(\BR^n, \mathbb{C}_{m})$.
\end{cor}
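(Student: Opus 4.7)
The statement is essentially Theorem~\ref{THM:Sobolev1} specialized to $\lambda = n-1$, so the plan is to reduce it to that theorem by a pointwise Clifford-modulus estimate on the kernel. The key algebraic observation is that $\overline{x-y} = -(x-y)$, since $x-y$ is a pure vector in $\mathbb{R}^n \subset \mathbb{R}_{0,m}$, so $|\overline{x-y}| = |x-y|$ and hence
$$
\left|\frac{\overline{x-y}}{|x-y|^n}\right| = \frac{1}{|x-y|^{n-1}}.
$$
The exponent bookkeeping also matches: setting $\lambda = n-1$ in the Hardy-Littlewood-Sobolev relation $1/p+1/q+\lambda/n = 2$ gives exactly $1/p+1/q-1/n = 1$.

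First, I would bound the integrand in absolute value. Grouping the product as $\overline{f(x)} \cdot \bigl(\tfrac{\overline{x-y}}{|x-y|^n}\, g(y)\bigr)$ and invoking the improved inequality $|av|\le |a||v|$ valid when one factor is a vector (which applies to the Cauchy-type kernel $\overline{x-y}/|x-y|^n$), then using $|ab|\le K_m|a||b|$ for the remaining Clifford product, one obtains
$$
\left|\overline{f(x)}\,\frac{\overline{x-y}}{|x-y|^n}\,g(y)\right| \le K_m\,|f(x)|\,\frac{1}{|x-y|^{n-1}}\,|g(y)|.
$$
Inserting this into the double integral and applying Theorem~\ref{THM:Sobolev1} with $\lambda = n-1$ (which lies in the admissible range $(0,n)$ as soon as $n\geq 2$) yields the inequality with the constant $C_{p,n-1,n,m}$ from that theorem, completing the argument. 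The $\mathbb{C}_m$-version is identical with the corresponding $K_m$ from \eqref{Knc}.

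There is no real obstacle; the only item to verify is that the hypotheses are non-vacuous. For $n\geq 2$ the condition $1/p+1/q = 1+1/n$ is compatible with $1<p,q<\infty$ (e.g.\ $p=q=\tfrac{2n}{n+1}$), and $\lambda = n-1\in(0,n)$ lies in the range required by Theorem~\ref{THM:Sobolev1}. For $n=1$ the condition would force $1/p+1/q = 2$, which is impossible under $p,q>1$, so the statement is vacuous in that case and no separate treatment is needed.
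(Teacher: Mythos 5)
Your proposal is correct and is essentially the same argument the paper has in mind: the paper explicitly states, right before the corollary, that the result follows from Theorem~\ref{THM:Sobolev1} by observing $\left|\tfrac{\overline{x-y}}{|x-y|^n}\right| = \tfrac{1}{|x-y|^{n-1}}$, and your pointwise Clifford-modulus estimate (using the vector case of the product inequality for the Cauchy kernel, then $K_m$ once) fills in that reduction precisely, with the right exponent bookkeeping. Your remark that the $n=1$ case is vacuous under $1<p,q<\infty$ is a correct clarification not made explicit in the paper.
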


Let us remark that we can also get concrete values for the constant $C_{p,\lambda,n}$ in some special cases using the classic results from~\cite{Li83} (Theorem 3.1 and Corollary 3.2). These constants will be also useful for our further analysis.

\begin{thm}\label{THM:HLS-e} Let $m\geq n\geq 1$.
Let $p=q=\frac{2n}{2n-\lambda}$ and $0<\lambda<n$. Then for any $f,g\in L^p(\BR^n,\R_{0,m})$ we have
$$
\left| \int_{\BR^n}\int_{\BR^n} \overline{f(x)}\frac{1}{|x-y|^\lambda}g(y)dxdy\right|\leq C_{p,\lambda,n,m}\|f\|_{L^p(\R^n,\R_{0,m})}\|g\|_{L^p(\R^n,\R_{0,m})}
$$
with $C_{p,\lambda,n,m}=K_m\pi^{\lambda/2}\frac{\Gamma(n/2-\lambda/2)}{\Gamma(n-\lambda/2)}\left(\frac{\Gamma(n/2)}{\Gamma(n)}\right)^{-1+\lambda/n}$ for $K_m$ as in \eqref{Kn}. 

If $q=2$ and $p=2n/(3n-2\lambda)$ with $n<2\lambda<2n$, then for any $f\in L^p(\BR^n,\R_{0,m})$, $g\in L^2(\BR^n,\R_{0,m})$, we have
$$
\left| \int_{\BR^n}\int_{\BR^n} \overline{f(x)}\frac{1}{|x-y|^\lambda}g(y)dxdy\right|\leq C_{p,\lambda,n,m}\|f\|_{L^p(\R^n,\R_{0,m})}\|g\|_{L^2(\R^n,\R_{0,m})}
$$
with $C_{p,\lambda,n,m}=K_m\pi^{\lambda/2}\frac{\Gamma(n/2-\lambda/2)}{\Gamma(n-\lambda/2)}\left(\frac{\Gamma(\lambda-n/2)}{\Gamma(3n/2-\lambda)}\right)^{1/2} \left(\frac{\Gamma(n/2)}{\Gamma(n)}\right)^{-1+\lambda/n}$ . 
The same result holds for $f,g\in L^p(\BR^n,\mathbb{C}_{m})$, with $K_m$ as in \eqref{Knc}.
\end{thm}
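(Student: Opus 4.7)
The approach is to mimic the proof of Theorem \ref{THM:Sobolev1} but now insert the sharp constants from Lieb's paper \cite{Li83} instead of the generic Hardy--Littlewood--Sobolev constant. First I would pass from the Clifford integrand to its scalar absolute value via the Hile-type inequality $|ab|\leq K_m|a||b|$, which (as in the proof of Theorem \ref{THM:Sobolev1}) gives
\[
\left|\int_{\BR^n}\int_{\BR^n}\overline{f(x)}\,\frac{1}{|x-y|^{\lambda}}\,g(y)\,dx\,dy\right|
\leq K_m\int_{\BR^n}\int_{\BR^n}|f(x)|\,\frac{1}{|x-y|^{\lambda}}\,|g(y)|\,dx\,dy,
\]
the factor $K_m$ being the one from \eqref{Kn} in the real case and \eqref{Knc} in the complex case. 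This reduces everything to a \emph{scalar} HLS inequality for the nonnegative functions $|f|$ and $|g|$, which belong to the same $L^p$ and $L^q$ spaces as $f$ and $g$ with identical norms (since the Clifford norm is defined componentwise).

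For the diagonal case $p=q=2n/(2n-\lambda)$, I would invoke the sharp conformally invariant HLS bound of Lieb (\cite{Li83}, Theorem 3.1) which yields
\[
\int_{\BR^n}\int_{\BR^n}|f(x)|\,\frac{1}{|x-y|^{\lambda}}|g(y)|\,dx\,dy
\leq \pi^{\lambda/2}\,\frac{\Gamma(n/2-\lambda/2)}{\Gamma(n-\lambda/2)}
\left(\frac{\Gamma(n/2)}{\Gamma(n)}\right)^{-1+\lambda/n}\!\|f\|_{L^p}\|g\|_{L^p}.
\]
Multiplying by $K_m$ gives precisely the first stated value of $C_{p,\lambda,n,m}$. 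For the off-diagonal case $q=2$, $p=2n/(3n-2\lambda)$ with $n<2\lambda<2n$, I would apply Lieb's companion result (\cite{Li83}, Corollary 3.2), which is the sharp HLS inequality for exponents $(p,2)$ in the range $n/2<\lambda<n$; this gives the same Gamma expression multiplied by the additional factor $\bigl(\Gamma(\lambda-n/2)/\Gamma(3n/2-\lambda)\bigr)^{1/2}$ coming from the non-conformal endpoint. Again, multiplying by $K_m$ produces the stated constant.

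The complex-Clifford version $f,g\in L^p(\BR^n,\mathbb{C}_m)$ is identical line-by-line: the only change is the value of $K_m$, which is read off from \eqref{Knc} instead of \eqref{Kn}. There is essentially no obstacle here beyond verifying that Lieb's conditions on the parameters match ours: in the first case $0<\lambda<n$ together with $1/p+1/p+\lambda/n=2$ is exactly the conformal case, and in the second case $n<2\lambda<2n$ is precisely the range in which Corollary 3.2 of \cite{Li83} applies and in which $p=2n/(3n-2\lambda)\in(1,2)$. Thus the proof reduces to (i) the Clifford scalarization step, and (ii) quoting Lieb's sharp constants; no further estimates are needed.
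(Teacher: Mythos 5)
Your proposal matches the paper's intended argument exactly: the paper does not give a formal proof of Theorem~\ref{THM:HLS-e}, but simply remarks just before the statement that the constants follow from the classic sharp results of Lieb (\cite{Li83}, Theorem 3.1 and Corollary 3.2), the Clifford scalarization step having already been carried out in the proof of Theorem~\ref{THM:Sobolev1}. Your two-step reduction — first pass to $|f|$, $|g|$ via the Hile inequality $|ab|\leq K_m|a||b|$, then quote Lieb's sharp constants for the diagonal case $p=q$ (Theorem 3.1) and the off-diagonal case $q=2$ (Corollary 3.2) — is precisely this argument, so the proposal is correct and essentially identical to what the paper has in mind.
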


For the specific case of the Cauchy kernel we have the following corollary.

\begin{cor} Let $m\geq n\geq 2$.
Let $p=q=\frac{2n}{n+1}$. Then for any $f,g\in L^p(\BR^n, \R_{0,m})$ we have
$$
\left| \int_{\BR^n}\int_{\BR^n} \overline{f(x)}\frac{\overline{x-y}}{|x-y|^{n}}g(y)dxdy\right|\leq C_{p,n-1,n,m}\|f\|_{L^p(\BR^n, \R_{0,m})}\|g\|_{L^p(\BR^n, \R_{0,m})}
$$
with $C_{p,n-1,n,m}=K_m\pi^{(n-1)/2}\frac{\Gamma(1/2)}{\Gamma((n+1)/2)}\left(\frac{\Gamma(n)}{\Gamma(n/2)}\right)^{1/n}$.

In the case $n\geq 3$ if $q=2$ and $p=2n/(n+2)$ then for any $f\in L^p(\BR^n, \R_{0,m})$, $f \in L^2(\BR^n, \R_{0,m})$ we have
$$
\left| \int_{\BR^n}\int_{\BR^n} \overline{f(x)}\frac{\overline{x-y}}{|x-y|^{n}}g(y)dxdy\right|\leq C_{p,n-1,n,m}\|f\|_{L^p(\BR^n, \R_{0,m})}\|g\|_{L^2(\BR^n, \R_{0,m})}
$$
with $C_{p,n-1,n,m}=K_m\pi^{(n-1)/2}\frac{\Gamma(1/2)}{\Gamma((n+1)/2)}\left(\frac{\Gamma(n/2-1)}{\Gamma(n/2+1)}\right)^{1/2} \left(\frac{\Gamma(n)}{\Gamma(n/2)}\right)^{1/n}$ . In both cases the constant $K_m$ is as in \eqref{Kn}.

The same result holds for $f,g\in L^p(\BR^n,\mathbb{C}_{m})$, with $K_m$ as in \eqref{Knc}.
\end{cor}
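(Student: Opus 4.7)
The plan is to obtain this statement as a direct specialization of Theorem \ref{THM:HLS-e} with $\lambda=n-1$, after reducing the Cauchy kernel to the Riesz kernel at the level of absolute values. More precisely, I would start from the pointwise identity
$$
\left|\frac{\overline{x-y}}{|x-y|^{n}}\right|=\frac{1}{|x-y|^{n-1}},
$$
which uses the fact that $x-y$ is a vector (so $\overline{x-y}\,(x-y)=|x-y|^2$). Combining this with the Clifford multiplication estimate $|ab|\leq K_m|a||b|$, whose effect has already been absorbed into the constants of Theorem \ref{THM:HLS-e}, one immediately gets
$$
\left|\int_{\BR^n}\!\!\int_{\BR^n}\overline{f(x)}\frac{\overline{x-y}}{|x-y|^{n}}g(y)\,dx\,dy\right|
\leq \int_{\BR^n}\!\!\int_{\BR^n}|f(x)|\frac{1}{|x-y|^{n-1}}|g(y)|\,dx\,dy,
$$
and the right-hand side is controlled by the scalar Hardy--Littlewood--Sobolev inequality with exponent $\lambda=n-1$.

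Next I would verify that the exponents fall within the admissible range of Theorem \ref{THM:HLS-e} with $\lambda=n-1$. For the diagonal case one must check that $p=q=\frac{2n}{2n-\lambda}=\frac{2n}{n+1}$ and that $0<\lambda<n$, which forces $n\geq 2$. For the off-diagonal case $q=2$ we need $p=\frac{2n}{3n-2\lambda}=\frac{2n}{n+2}$ together with $n<2\lambda<2n$, i.e.\ $n<2(n-1)<2n$, which is equivalent to $n\geq 3$. Both ranges match the hypotheses stated in the corollary.

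Finally I would compute the constants by substituting $\lambda=n-1$ into the explicit expressions of Theorem \ref{THM:HLS-e}. For the diagonal case this gives
$$
C_{p,n-1,n,m}=K_m\pi^{(n-1)/2}\frac{\Gamma(1/2)}{\Gamma((n+1)/2)}\left(\frac{\Gamma(n/2)}{\Gamma(n)}\right)^{-1/n},
$$
which rewrites, using $(\Gamma(n/2)/\Gamma(n))^{-1/n}=(\Gamma(n)/\Gamma(n/2))^{1/n}$, as the expression claimed. For the off-diagonal case one picks up the extra factor
$$
\left(\frac{\Gamma(\lambda-n/2)}{\Gamma(3n/2-\lambda)}\right)^{1/2}=\left(\frac{\Gamma(n/2-1)}{\Gamma(n/2+1)}\right)^{1/2},
$$
giving precisely the stated constant. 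The identical reasoning in $\mathbb{C}_m$ uses only that $|ab|\leq K_m|a||b|$ persists in the complexified algebra, with $K_m$ now from \eqref{Knc}.

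There is no genuine obstacle here: the corollary is a bookkeeping exercise on top of Theorem \ref{THM:HLS-e}, the only points requiring care being (i) the pointwise reduction of the Cauchy kernel to $|x-y|^{-(n-1)}$, which avoids an extra Clifford multiplication constant on the kernel itself, and (ii) the dimensional restrictions $n\geq 2$ and $n\geq 3$ arising from the admissibility conditions on $\lambda$ in each of the two regimes.
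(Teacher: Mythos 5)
Your proposal is correct and matches the paper's intended route: the paper offers no separate proof of this corollary, but the remark preceding the earlier corollary makes clear that the reduction is exactly via $\left|\frac{\overline{x-y}}{|x-y|^n}\right|=\frac{1}{|x-y|^{n-1}}$ and specialization of Theorem~\ref{THM:HLS-e} to $\lambda=n-1$; your exponent checks ($p=q=\tfrac{2n}{n+1}$ for $n\geq 2$; $p=\tfrac{2n}{n+2}$, $q=2$ for $n\geq 3$) and the Gamma-function substitutions are all right.

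One small bookkeeping slip worth fixing: the displayed pointwise reduction
$$
\left|\int_{\BR^n}\!\int_{\BR^n}\overline{f(x)}\frac{\overline{x-y}}{|x-y|^{n}}g(y)\,dx\,dy\right|
\leq \int_{\BR^n}\!\int_{\BR^n}|f(x)|\frac{1}{|x-y|^{n-1}}|g(y)|\,dx\,dy
$$
as written is missing a factor of $K_m$ on the right. The vector nature of the Cauchy kernel removes one potential $K_m$ (the product of a Clifford element with a vector satisfies $|ab|\le|a||b|$), but one factor of $K_m$ is still needed for the remaining product of two general Clifford elements, e.g.\ $\left|\overline{f(x)}\cdot\bigl(\tfrac{\overline{x-y}}{|x-y|^n}g(y)\bigr)\right|\le K_m|f(x)|\tfrac{1}{|x-y|^{n-1}}|g(y)|$. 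Carrying this $K_m$ through and then applying the scalar sharp HLS inequality (Lieb's constant at $\lambda=n-1$) produces precisely the claimed $C_{p,n-1,n,m}=K_m\times(\text{Lieb's constant})$, which is what you obtain by substituting $\lambda=n-1$ into the constant of Theorem~\ref{THM:HLS-e}. So the conclusion and the constants stand; only the intermediate display should carry the $K_m$ explicitly rather than asserting it is already ``absorbed''.
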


Note that since we have $D^2=-\Delta$ we also have $$\|Df\|^{2}_{L^2}=\inner{Df,Df}=\inner{-\Delta f,f}=\|\nabla f\|^{2}_{L^2},$$
since $D^*=-D$.

As a consequence, we obtain the following Sobolev inequality for the Dirac operator, with an explicit constant.

\begin{thm}\label{THM:Sobolev2} Let $m\geq n\geq 3$. Then we have
$$
\|f\|_{L^{\frac{2n}{n-2}}(\BR^n, \R_{0,m})}\leq C_1 \|D f\|_{L^2(\BR^n, \R_{0,m})},
$$
with 
%$C_1=\left(\frac{2^{n/2+1}}{n(n-2)}\pi^{n-1}\frac{\Gamma(n)^{2/n}}{\Gamma(n/2)^{2+2/n}}\right)^{1/2}$.
$C_1=\frac{\sqrt{2}K_m^{3/2}}{\sqrt{n(n-2)}}\pi^{\frac{n-1}{2}}\frac{\Gamma(n)^{1/n}}{\Gamma(n/2)^{1+1/n}}$ where $K_m$ is  as in \eqref{Kn}.

The same result holds for $f\in L^p(\BR^n,\mathbb{C}_{m})$ with $K_m$ as in \eqref{Knc}.
\end{thm}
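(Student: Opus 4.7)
My plan is to invert the Dirac operator by means of the Teodorescu transform and then to apply the Cauchy-kernel Hardy--Littlewood--Sobolev estimate proved in the corollary above, combined with a Clifford $L^{p}$-$L^{p'}$ duality argument. By density I reduce to $f\in C_c^\infty(\BR^n,\BR_{0,m})$; since $DT=I$ and $f$ decays at infinity, I may represent
\begin{equation*}
f(x) \;=\; T(Df)(x) \;=\; -\frac{1}{\omega_n}\int_{\BR^n} \frac{\overline{x-y}}{|x-y|^n}\,Df(y)\,dy .
\end{equation*}

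Next I would test $f$ against a Clifford-valued $g$ with $\|g\|_{L^{2n/(n+2)}}\leq 1$ via the sesquilinear pairing $\inner{g,f}$. Substituting the Teodorescu representation, interchanging the order of integration, and passing to absolute values through $|[\,\cdot\,]_0|\leq |\,\cdot\,|$ produces the double integral
\begin{equation*}
|\inner{g,f}| \;\leq\; \frac{1}{\omega_n}\,\left|\int_{\BR^n}\!\int_{\BR^n}\overline{g(x)}\,\frac{\overline{x-y}}{|x-y|^n}\,Df(y)\,dx\,dy\right| .
\end{equation*}
Since $n\geq 3$ the second Cauchy-kernel estimate in the corollary following Theorem~\ref{THM:HLS-e} applies (with $q=2$, $p=2n/(n+2)$, $\lambda=n-1$) and bounds this by $\omega_n^{-1}\,C_{p,n-1,n,m}\,\|g\|_{L^{p}}\|Df\|_{L^2}$. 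The Clifford $L^{2n/(n+2)}$-$L^{2n/(n-2)}$ duality (based on the inequalities listed in Section~\ref{SEC:pre}, which cost an extra power of $K_m$ from $|ab|\leq K_m|a||b|$) then converts the supremum over $g$ into $\|f\|_{L^{2n/(n-2)}}$, giving an inequality of the shape $\|f\|_{L^{2n/(n-2)}}\leq C\,\|Df\|_{L^2}$.

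The last and most delicate step is to identify this prefactor with the stated $C_1$. For this I would invoke the Legendre duplication formula $\Gamma(n/2)\Gamma((n+1)/2)=2^{1-n}\sqrt{\pi}\,\Gamma(n)$, together with $\Gamma(1/2)=\sqrt{\pi}$ and $\Gamma(n/2-1)/\Gamma(n/2+1)=4/\bigl(n(n-2)\bigr)$, to collapse the Gamma factors inside $C_{p,n-1,n,m}$ and $\omega_n^{-1}$ into the quotient $\Gamma(n)^{1/n}/\Gamma(n/2)^{1+1/n}$, and to produce the $\sqrt{2}/\sqrt{n(n-2)}$ and $\pi^{(n-1)/2}$ of the theorem. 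The $K_m^{3/2}$ prefactor then comes from combining the single $K_m$ of the Cauchy-kernel HLS estimate with an additional $K_m^{1/2}$ produced by the Clifford duality step. The main obstacle in the proof is precisely this bookkeeping: tracking the powers of $K_m$ through the duality, where non-commutativity of the Clifford product prevents scalar H\"older/Riesz duality from passing through without loss, and verifying that the accumulated $\pi$- and $\Gamma$-powers simplify exactly as stated. The complexified case $f\in L^{p}(\BR^n,\mathbb{C}_m)$ is then immediate, with $K_m$ read from \eqref{Knc}.
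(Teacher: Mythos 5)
Your strategy (invert $D$ by the Teodorescu transform, pair against a test function $g\in L^{2n/(n+2)}$, apply the Cauchy-kernel HLS, then recover the $L^{2n/(n-2)}$ norm by duality) is a sound way to get \emph{a} Sobolev inequality, and it is genuinely different from the paper's route. The paper instead applies Plancherel to reduce $|\inner{u,g}|^2\leq K_m^2\|Du\|_{L^2}^2\inner{Tg,Tg}$, computes $\inner{Tg,Tg}$ exactly by composing the Cauchy kernel with itself to produce the Riesz potential $|x-y|^{-(n-2)}$, applies Lieb's \emph{sharp diagonal} HLS (first part of Theorem~\ref{THM:HLS-e}, $p=q=2n/(n+2)$, $\lambda=n-2$), and closes via the self-pairing $g=|u|^{q-2}\bar u$.

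However, your route does \emph{not} recover the stated constant, and the final bookkeeping paragraph does not survive a check. Two concrete issues.

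First, the duality step costs no $K_m$ at all: with $g=|f|^{p'-2}f/\|f\|_{L^{p'}}^{p'-1}$ and $p'=2n/(n-2)$ one has $\inner{g,f}=\int[\bar g f]_0=\int|f|^{p'-2}[\bar f f]_0/\|f\|^{p'-1}=\|f\|_{L^{p'}}$, with $\|g\|_{L^p}=1$. So the extremal is exact, not lossy; you would end up with $K_m^{1}$, not $K_m^{3/2}$. (The paper's $K_m^{3/2}$ arises from $K_m^2$ via Plancherel/Cauchy--Schwarz plus $K_m$ from HLS, then a square root — a mechanism your argument does not have.)

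Second, and more decisively, the $\pi$- and $\Gamma$-powers do not collapse to the stated $C_1$. Carrying out the simplification you propose,
\begin{equation*}
\omega_n^{-1}\,C_{p,n-1,n,m}
= K_m\,\frac{\pi^{n/2}}{\Gamma(n/2)}\,\pi^{\frac{n-1}{2}}\,\sqrt{\pi}\cdot\frac{2^{n-1}\Gamma(n/2)}{\sqrt{\pi}\,\Gamma(n)}\cdot\frac{2}{\sqrt{n(n-2)}}\cdot\frac{\Gamma(n)^{1/n}}{\Gamma(n/2)^{1/n}}
= K_m\,\frac{2^{n}}{\sqrt{n(n-2)}}\,\pi^{n-\frac12}\,\frac{\Gamma(n)^{\frac{1-n}{n}}}{\Gamma(n/2)^{1/n}},
\end{equation*}
whereas the theorem's constant is $C_1=\frac{\sqrt2\,K_m^{3/2}}{\sqrt{n(n-2)}}\,\pi^{\frac{n-1}{2}}\,\frac{\Gamma(n)^{1/n}}{\Gamma(n/2)^{1+1/n}}$. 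Their ratio (up to $K_m$ powers) equals $\sqrt2\,\pi^{(n+1)/2}/\Gamma\big((n+1)/2\big)$, which for $n=3$ is $\sqrt2\,\pi^2\approx 14$. The discrepancy is structural, not arithmetic: you invoke the \emph{off-diagonal} HLS estimate of the corollary, which comes from Lieb's Corollary 3.2 and is not sharp, with kernel exponent $\lambda=n-1$. The paper's proof, by contrast, reduces to the sharp \emph{diagonal} HLS constant with exponent $\lambda=n-2$ via the convolution identity $\int \frac{\overline{x-z}}{|x-z|^n}\frac{y-z}{|y-z|^n}\,dz = \frac{\omega_n}{n-2}|x-y|^{-(n-2)}$. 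There is no Legendre-type identity that makes your worse constant equal to theirs. To obtain the stated $C_1$ by a Teodorescu route, you would need to first pass to the Riesz kernel as the paper does rather than applying HLS to the Cauchy kernel directly.
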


\begin{proof}
First we observe that using the Plancherel formula for the Fourier transform of Clifford-valued functions~(\cite{BDS82}) we get  

\begin{eqnarray}\label{inneru,g}
|\inner{u,g}|^2 & = & \left|\inner{\widehat u, \widehat g}\right|^2\nonumber\\
& = & \left|\inner{(i\xi)\widehat u, \frac{i\xi}{|\xi|^2}\widehat g}\right|^2\nonumber\\
& \leq & K_m^2 \|D u\|^2_{L^2}\|Tg\|^2_{L^2}\nonumber\\
& = & K_m^2\inner{Du,Du}\inner{Tg,Tg}.
\end{eqnarray}
Let us now estimate the term $\inner{Tg,Tg}$, with $T$ as in \eqref{EQ:k}-\eqref{EQ:Te}. We have
\begin{equation*}
    \begin{split}
      \inner{Tg,Tg}  &= \left[\int_{\R^n}\int_{\R^n}\overline{\frac{1}{\omega_n}\frac{x-z}{|x-z|^n}g(x)dx}\int_{\R^n}\frac{1}{\omega_n}\frac{y-z}{|y-z|^n}g(y)dydz \right]_0\\&
\left[\int_{\R^n}\int_{\R^n}\overline{g(x)}\left(\int_{\R^n}\frac{1}{\omega_{n}^{2}}\frac{\overline{x-z}}{|x-z|^n}\frac{y-z}{|y-z|^n}dz \right)g(y)dxdy \right]_0.
    \end{split}
\end{equation*}
With the change of variables $v=z-y$ the above integral with respect to the variable $z$ becomes 
\begin{equation*}
    -\int_{\R^n}\frac{1}{\omega_{n}^{2}}\frac{\overline{x-y-v}}{|x-y-v|^n}\frac{v}{|v|^n}dv\,,
\end{equation*}
and we also have 
\begin{equation*}
    \frac{v}{|v|^n}=D\left(\frac{1}{(-n+2)}\frac{1}{|v|^{n-2}}\right),
\end{equation*}
in view of
$$
\sum_j e_j\partial_j\left( (\sum_{k}v_k^2)^{-\frac{n-2}{2}}\right)=
\sum_j e_j (\sum_{k}v_k^2)^{-\frac{n-2}{2}-1}(-\frac{n-2}{2})2v_j=
(-n+2)|v|^{-n}v.
$$
Therefore, we get 
\begin{equation*}
    \begin{split}
     \int_{\R^n}\frac{1}{\omega_{n}^{2}}\frac{\overline{x-y-v}}{|x-y-v|^n}\frac{v}{|v|^n}dv\ &=    \frac{1}{\omega_{n}^{2}} \int_{\R^n} \frac{\overline{x-y-v}}{|x-y-v|^n}D\left(\frac{1}{(-n+2)}\frac{1}{|v|^{n-2}}\right)dv\\&
     =\frac{1}{\omega_n}TD\left(\frac{1}{(-n+2)}\frac{1}{|\cdot|^{n-2}}\right)(x-y)\\
     & =\frac{1}{\omega_n}\frac{1}{(-n+2)}\frac{1}{|x-y|^{n-2}}.
    \end{split}
\end{equation*}
From our Hardy-Littlewood-Sobolev inequality~(Theorem~\ref{THM:HLS-e}, first part) for $\lambda=n-2$, we have
$C_{p,n-2,n,m}=K_m\pi^{\frac{n-2}{2}}\frac{1}{\Gamma(\frac{n+2}{2})}\left(\frac{\Gamma(n/2)}{\Gamma(n)}\right)^{-\frac{2}{n}}$. 
Consequently, for 
the case of $p=\frac{2n}{n+2}$ and for $\lambda=n-2$ with $\omega_n=\frac{\Gamma(n/2)}{\pi^{n/2}}$ we get
\begin{eqnarray*}
\inner{Tg, Tg} & = & \frac{1}{(n-2)\omega_n}\int_{\R^n}\int_{\R^n} \left[\overline{g(x)}\frac{1}{|x-y|^{n-2}} g(y)\right]_0dxdy\\
& \leq & \frac{1}{(n-2)\omega_n}C_{p,n-2,n,m} \|g\|^2_{L^\frac{2n}{n+2}}\\
 & = & \frac{2 K_m}{n(n-2)}\pi^{n-1}\frac{\Gamma(n)^{2/n}}{\Gamma(n/2)^{2+2/n}}\|g\|^2_{L^\frac{2n}{n+2}}\,,
\end{eqnarray*}
where we have used the fact that $\Gamma(x+1)=x\Gamma(x)$.
Now, using $|u|^2=[u\overline{u}]_0$ and \eqref{inneru,g} for $q>1$, we have $|u|^q=|u|^{q-2}|u|^2=|u|^{q-2}[u\bar{u}]_0$, and hence
\begin{eqnarray*}
    \|u\|_{L^q}^{2q} & = & (\int_{\R^n} [u|u|^{q-2}\bar u]_0 dx)^2\\
    & = & |\inner{u, |u|^{q-2} \overline{u}}|^2\\
    & \leq & K_m^2 \inner{Du,Du} \inner{T(|u|^{q-2} \overline{u}), T(|u|^{q-2} \overline{u})}\\
    & \leq & \frac{2 K_m^3}{n(n-2)}\pi^{n-1}\frac{\Gamma(n)^{2/n}}{\Gamma(n/2)^{2+2/n}}   \inner{Du,Du} \||u|^{q-2} \overline{u}\|_{L^\frac{2n}{n+2}}^{2}.
\end{eqnarray*}
Obviously, in the case of $q>1$ we have that $||u|^{q-2} \overline{u}|\leq |u|^{q-1}$ is well defined.
Now, we claim that for $q=\frac{2n}{n-2}$, we have that 
$|u|^{q-2} \overline{u}\in L^\frac{2n}{n+2}(\mathbb{R}^n,\mathbb{R}_{0,m})$ with the equality of the norm. Indeed, we have $q-1=\frac{n+2}{n-2}$ and
\begin{multline*}
    \left(\int_{\R^n}\left| |u|^{q-2} \overline{u}\right|^{\frac{2n}{n+2}}dx\right)^{\frac{n+2}{2n}} =
    \left(\int_{\R^n}|u|^{\frac{2n}{n-2}}dx\right)^{\frac{n+2}{2n}}
    \\ = \left(\int_{\R^n}|u|^{\frac{2n}{n-2}}dx\right)^{\frac{n-2}{2n}\frac{2n}{n-2}\frac{n+2}{2n}}=
    \|u\|_{L^\frac{2n}{n-2}}^{\frac{n+2}{n-2}}.
\end{multline*}
Consequently, with $q=\frac{2n}{n-2}$, we have
\begin{eqnarray*}
\|u\|_{L^q}^{2q} & \leq & \frac{2 K_m^3}{n(n-2)}\pi^{n-1}\frac{\Gamma(n)^{2/n}}{\Gamma(n/2)^{2+2/n}}   \inner{Du,Du} \||u|^{q-2} \overline{u}\|_{L^\frac{2n}{n+2}}^{2} \\
& = &
\frac{2K_m^3}{n(n-2)}\pi^{n-1}\frac{\Gamma(n)^{2/n}}{\Gamma(n/2)^{2+2/n}}   \inner{Du,Du} \|u\|_{L^q}^{2\frac{n+2}{n-2}}.
\end{eqnarray*}
Since for $q=\frac{2n}{n-2}$ we have that $2q-2\frac{n+2}{n-2}=2$, we obtain
\begin{eqnarray*}
 \|u\|_{L^\frac{2n}{n-2}}^{2} & \leq & \frac{2K_m^3}{n(n-2)}\pi^{n-1}\frac{\Gamma(n)^{2/n}}{\Gamma(n/2)^{2+2/n}}  \inner{Du,Du}\\
& = & \frac{2K_m^3}{n(n-2)}\pi^{n-1}\frac{\Gamma(n)^{2/n}}{\Gamma(n/2)^{2+2/n}} \|Du\|^{2}_{L^2}\,,
\end{eqnarray*}
completing the proof.
\end{proof}
Next we will prove the $L^p(\R^n, \R_{0,m})$-Sobolev inequality, for $1< p<n$, using a different method. Before doing so, let us first recall the following version of Young's convolution inequality, see e.g. \cite[Proposition 1.5.2]{FR16}. If $p,q \in (1,\infty)$ are such that $\frac{1}{p}+\frac{1}{q}>1$, $f_1 \in L^p(\R^n)$ and $f_2$ satisfies the weak-$L^q(\R^n)$ condition, i.e., it satisfies
\[
\sup_{s>0} s^q |\{ x\,:\, |f_2(x)|>s\}| := \|f_2\|^{q}_{w-L^q(\R^n)}<\infty\,,
\]
then we have 
\begin{equation}\label{Young2}
    \|f_1 \ast f_2\|_{r} \leq \|f_1\|_p \|f_2\|_{w-L^q(\R^n)}<\infty\,,
\end{equation}
for $r$ such that $\frac{1}{p}+\frac{1}{q}=\frac{1}{r}+1$. It can easily be verified that the convolution inequality \eqref{Young2} can be extended to Clifford-valued functions with the inequality being replaced by
\begin{equation}
    \label{Young}
    \|f_1 \ast f_2\|_{r} \leq K_m \|f_1\|_p \|f_2\|_{w-L^q(\R^n)}<\infty\,,
\end{equation}
in the general case, and by
\begin{equation}
    \label{Young1}
    \|f_1 \ast f_2\|_{r} \leq \|f_1\|_p \|f_2\|_{w-L^q(\R^n)}<\infty\,,
\end{equation}
in case either $f_1$ or $f_2$ is scalar or vector-valued. 

Using this inequality we can prove the following theorem.
\begin{thm}
    \label{THM:Sob.p}
    Let $l\geq 1$ be an integer, and let $1< p<n/l$, and let $m\geq n\geq 2$.
    Then we have
    \[
    \|f\|_{L^{\frac{pn}{n-pl}}(\BR^n,\mathbb{R}_{0,m})}\leq C_l \|D^lf\|_{L^p(\BR^n,\mathbb{R}_{0,m})}\,,
    \]
    with $C_l=c_l n^{\frac{l-n}{n}}\omega_n^{-\frac{l}{n}},$
    where $c_l=\frac{1}{2^{j-1}(j-1)!\Pi_{h=1}^j(2h-n)}$ if $l=2j$ is even and $c_l=\frac{1}{2^{j} j!\Pi_{h=1}^j(2h-n)}$ if $l=2j+1$ is odd. The inequality continues to be true for $f\in L^p(\BR^n,\mathbb{C}_{m})$.
\end{thm}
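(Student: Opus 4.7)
The strategy is to express $f$ as the convolution of $D^l f$ with a fundamental solution $E_l$ of $D^l$, and then apply the weak-type Young convolution inequality \eqref{Young1}. The key feature is that $E_l$ is scalar-valued when $l$ is even, and vector-valued (a multiple of $\bar x\,|x|^{l-n-1}$) when $l$ is odd, so \eqref{Young1} applies in its sharper form without the factor $K_m$, and the resulting constant depends only on $l$, $n$ and $\omega_n$.

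The first task is to construct $E_l$ inductively and to show that $|E_l(x)| = (c_l/\omega_n)\,|x|^{l-n}$, where $c_l$ is the constant in the statement. Two identities are used repeatedly:
\[
\Delta |x|^{\alpha} = \alpha(\alpha+n-2)|x|^{\alpha-2}, \qquad \Delta(\bar x\,|x|^{\alpha}) = \alpha(\alpha+n)\,\bar x\,|x|^{\alpha-2},
\]
together with the distributional base cases $(-\Delta)|x|^{2-n} = (n-2)\omega_n\,\delta$ and $Dk=\delta$ for the Cauchy kernel. For $l=2j$ even, iterating $(-\Delta)$ on $|x|^{2j-n}$ collects the product $\prod_{h=2}^{j}2(h-1)(2h-n)$, the vanishing factor at $h=1$ being replaced distributionally by $(n-2)\omega_n$; after regrouping signs this produces $\prod_{h=1}^{j}(2h-n)$, and $E_l$ is the corresponding scalar multiple of $|x|^{l-n}$. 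For $l=2j+1$ odd, iterating $(-\Delta)^{j}$ on $\bar x\,|x|^{2j-n}$ yields a constant multiple of $\bar x\,|x|^{-n}=-\omega_n k(x)$, and the final application of $D$ supplies $Dk=\delta$; the resulting $E_l$ is a vector-valued multiple of $\bar x\,|x|^{l-n-1}$. In both cases a direct bookkeeping of the constants gives $|E_l(x)| = (c_l/\omega_n)|x|^{l-n}$ with $c_l$ exactly as in the theorem.

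The second step is to compute the weak-$L^{n/(n-l)}$ norm of $|x|^{l-n}$. The level set $\{|x|^{l-n}>s\}=\{|x|<s^{-1/(n-l)}\}$ has Lebesgue measure $(\omega_n/n)\,s^{-n/(n-l)}$, and the supremum over $s$ yields $\|\,|x|^{l-n}\,\|_{w\text{-}L^{n/(n-l)}}=(\omega_n/n)^{(n-l)/n}$. Applying \eqref{Young1} with $q=n/(n-l)$ and $r$ determined by $1/p+1/q=1/r+1$, i.e.\ $r=pn/(n-pl)$, gives
\[
\|f\|_{L^r}\;\leq\;\|E_l\|_{w\text{-}L^{q}}\,\|D^l f\|_{L^p}\;=\;\frac{c_l}{\omega_n}\Big(\frac{\omega_n}{n}\Big)^{(n-l)/n}\|D^l f\|_{L^p}\;=\;c_l\,n^{(l-n)/n}\,\omega_n^{-l/n}\,\|D^l f\|_{L^p},
\]
which is the required inequality. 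The extension from Schwartz $f$ to general $f$ with $D^l f\in L^p$, and from the $\mathbb{R}_{0,m}$-valued to the $\mathbb{C}_m$-valued case, is a routine density argument.

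The main obstacle is the careful bookkeeping of the constants across the two parity cases: verifying that the product $\prod_{h=1}^{j}(2h-n)$ in $c_l$ is exactly what the iterated Laplacian produces once the distributional base case $(-\Delta)|x|^{2-n}=(n-2)\omega_n\delta$ is absorbed, and, in the odd case, that the Cauchy kernel supplies precisely the extra factor $1/\omega_n$ so that the uniform bound $|E_l(x)|=(c_l/\omega_n)|x|^{l-n}$ holds with the same $c_l$. The scalar or vector nature of $E_l$ in the respective cases is essential: without it one would be forced to use \eqref{Young} in place of \eqref{Young1}, spoiling the $K_m$-free form of the constant $C_l$.
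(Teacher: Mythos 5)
Your proof is essentially the same as the paper's: both express $f$ as the convolution $D^l f * k_l$ with the fundamental solution of $D^l$, compute the weak-$L^{n/(n-l)}$ norm of $k_l$ from the measure of its superlevel sets, and apply the $K_m$-free weak-type Young inequality \eqref{Young1}, exploiting that $k_l$ is scalar- or vector-valued. The only (harmless) differences are that you derive the fundamental solution inductively from the iterated Laplacian identities rather than citing \cite{KFKC12} for its form, and that you correctly distinguish the scalar case ($l$ even) from the vector case ($l$ odd) for $\overline{x}^{\,l}$, whereas the paper's parenthetical remark that ``$\overline{x}^{\,l}$ is always a vector'' is slightly imprecise --- in either parity \eqref{Young1} applies, so the conclusion and the constant are unaffected.
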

In particular, for $l=1$, we have $c_1=1$, and hence
\begin{equation}\label{EQ:c1-2}
    C_1= n^{\frac{1-n}{n}}\omega_n^{-\frac{1}{n}}
    =n^{\frac{1-n}{n}} \Gamma\left(\frac{n}{2} \right)^{-\frac{1}{n}}\pi^{1/2},
\end{equation}
in view of
$\omega_n=\frac{\Gamma\left(\frac{n}{2} \right)}{\pi^{n/2}}$.

\begin{rem}\label{REM:C1}
Investigating values of $C_1$ in Theorem \ref{THM:Sobolev2} and in Theorem  \ref{THM:Sob.p}, we can find
that the constant in \eqref{EQ:c1-2} is smaller for small $n$, while for large $n$ the constant $C_1$ in Theorem \ref{THM:Sobolev2} becomes smaller. Therefore, in what follows, when we refer to $C_1$, we will take the smaller of these two constants, from Theorem \ref{THM:Sobolev2} and from \eqref{EQ:c1-2}, which will depend on the value of $n$, so that with this (smaller) $C_1$ we have the inequality
$$
\|f\|_{L^{\frac{2n}{n-2}}(\BR^n, \R_{0,m})}\leq C_1 \|D f\|_{L^2(\BR^n, \R_{0,m})}.
$$
\end{rem}

\begin{proof}[Proof of Theorem \ref{THM:Sob.p}]
    Recall that the fundamental solution $k_l$ for the Dirac operator $D^{l}$, for $l\geq 1$ integer, is of the form~(\cite{KFKC12})
    \[
    k_l(x)=-\frac{c_l}{\omega_n}\frac{\overline{x}^l}{|x|^{n}},
    \]
    with $c_l=\frac{1}{2^{j-1}(j-1)!\Pi_{h=1}^j(2h-n)}$ if $l=2j$ is even and $c_l=\frac{1}{2^{j}j!\Pi_{h=1}^j(2h-n)}$ if $l=2j+1$ is odd. This can also be easily deduced from the formulae
    \[
    D\frac{\overline{x}^{2j}}{|x|^n}=(2j-n)\frac{\overline{x}^{2j-1}}{|x|^n} \mbox{ and } D\frac{\overline{x}^{2j+1}}{|x|^n}=(2j)\frac{\overline{x}^{2j}}{|x|^n},
    \]
    with $j\in\mathbb{N}$.
    Then the solution $f$ to the equation $D^lf=u$ can be written as the convolution $f=u \ast k_l$.
    We see that $k_l \in w-L^{\frac{n}{n-l}}$, since 
    \begin{multline*}
     |\{ x : k_l(x)>s\}|= |\{ x: \frac{c_l}{\omega_n}|x|^{l-n}>s\}|= \\
     |\{ x: |x|< \left(\frac{\omega_n}{c_l}\right)^{-\frac{1}{n-l}} s^{-\frac{1}{n-l}}\}|=\frac{\omega_n}{n}
     \left(\frac{\omega_n}{c_l}\right)^{-\frac{n}{n-l}}s^{-\frac{n}{n-l}},
     \end{multline*}
     in view of $|\{x: |x|<R\}|=\int_{B_R}dx=\frac{\omega_n}{n}R^n$, and hence
 $$
 \|k_l\|_{w-L^{\frac{n}{n-l}}}=\left(\sup_{s>0} s^{\frac{n}{n-l}}
 |\{ x : k_l(x)>s\}|\right)^{\frac{n-l}{n}}=
 \left( \frac{\omega_n}{n}\right)^{\frac{n-l}{n}}\frac{c_l}{\omega_n}
 =\frac{c_l}{n^{\frac{n-l}{n}}\omega_n^{\frac{l}{n}}}.
 $$
    Now, since $k_l$ is vector-valued ($\overline x^l$ is always a vector in view of $x^2=-|x|^2$ for $x\in\mathbb{R}^n$) we can use Young's inequality in the form \eqref{Young1} and we obtain 
    \begin{multline*}
        \|f\|_{L^r}  =   \|u \ast k\|_{L^r}
         \leq \|u\|_{L^p} \|k\|_{w-L^{\frac{n}{n-l}}}
         = \|D^lf\|_{L^p} \|k\|_{w-L^{\frac{n}{n-l}}}=
        \frac{c_l }{n^{\frac{n-l}{n}}\omega_n^{\frac{l}{n}}}\|D^lf\|_{L^p}\,,
    \end{multline*}
    where $r=\frac{pn}{n-pl}$, and the result follows.
\end{proof}

\section{Logarithmic Sobolev and Poincar\'e inequalities}\label{SEC:log.SobandPoi}
The following result, see \cite[Lemma 3.2]{CKR24} is the logarithmic version of the classical H\"older inequality, cf. \cite[Theorem 5.5.1 (ii)]{G92} on general measure spaces. 

\begin{lem}[Logarithmic H\"older inequality]
    Let $\mathbb{X}$ be a  measure space and let $u\in L^{p}(\mathbb{X})\cap L^{q}(\mathbb{X})\setminus\{0\}$, where $1<p<q< \infty.$ 
Then we have
\begin{equation}\label{holdernn}
\int_{\mathbb{X}}\frac{|u|^{p}}{\|u\|^{p}_{L^{p}(\mathbb{X})}}\log\left(\frac{|u|^{p}}{\|u\|^{p}_{L^{p}(\mathbb{X})}}\right)dx\leq \frac{q}{q-p}\log\left(\frac{\|u\|^{p}_{L^{q}(\mathbb{X})}}{\|u\|^{p}_{L^{p}(\mathbb{X})}}\right).
\end{equation}
\end{lem}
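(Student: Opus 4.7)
The plan is to derive this as a direct consequence of Jensen's inequality applied to the concave function $\log$, with respect to an auxiliary probability measure built from $|u|^p$. Since the inequality is scale-invariant in $u$ (rescaling $u$ by a positive constant leaves both sides unchanged), I may as well normalize $\|u\|_{L^p(\mathbb{X})}=1$ from the start, which keeps the bookkeeping cleaner; the general case then follows by replacing $u$ with $u/\|u\|_{L^p(\mathbb{X})}$.

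With that normalization, the measure $d\mu = |u|^p\,dx$ is a probability measure on $\mathbb{X}$. Set $r = q/p > 1$ and consider the nonnegative function
\[
f = |u|^{p(r-1)} = |u|^{q-p}.
\]
The point of this choice is twofold: first, $\log f = \frac{q-p}{p}\log(|u|^p)$ produces the desired logarithmic integrand on the left-hand side, and second, $f\,d\mu = |u|^{q}\,dx$ so that $\int_{\mathbb{X}} f\,d\mu = \|u\|_{L^q(\mathbb{X})}^{q}$, which will yield the right-hand side after taking a log.

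Applying Jensen's inequality in the form $\int_{\mathbb{X}} \log f\,d\mu \leq \log \int_{\mathbb{X}} f\,d\mu$ gives
\[
\frac{q-p}{p} \int_{\mathbb{X}} |u|^{p}\log(|u|^{p})\,dx \;\leq\; \log\bigl(\|u\|_{L^q(\mathbb{X})}^{q}\bigr) \;=\; \frac{q}{p}\log\bigl(\|u\|_{L^q(\mathbb{X})}^{p}\bigr).
\]
Dividing by $(q-p)/p$ and undoing the normalization (replacing $u$ by $u/\|u\|_{L^p(\mathbb{X})}$) produces exactly \eqref{holdernn}. There is no real obstacle here; the only thing to double-check is the sign in Jensen's inequality (concavity of $\log$ gives $\int \log f\,d\mu \leq \log\int f\,d\mu$, which is the correct direction) and that the integrand on the left of \eqref{holdernn} is integrable, which is guaranteed by $u\in L^p(\mathbb{X})\cap L^q(\mathbb{X})$ together with the elementary bounds $|t\log t|\leq C_\varepsilon(t^{1-\varepsilon}+t^{1+\varepsilon})$ for $t\geq 0$ and small $\varepsilon>0$.
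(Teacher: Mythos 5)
Your Jensen argument is correct and is exactly the standard route; the paper itself does not prove the lemma but cites \cite[Lemma 3.2]{CKR24}, whose proof is the same normalization-plus-Jensen computation with $d\mu=|u|^p\,dx/\|u\|_{L^p}^p$ and $f=|u|^{q-p}$. One small imprecision in your closing remark: the bound $|t\log t|\le C_\varepsilon(t^{1-\varepsilon}+t^{1+\varepsilon})$ would require $u\in L^{p(1-\varepsilon)}$ to control the negative part of the integrand, which is not guaranteed when $\mathbb{X}$ has infinite measure; the cleaner observation is that the positive part of $\log f$ is $\mu$-integrable because $(\log f)^+\le f$ and $\int f\,d\mu=\|u\|_{L^q}^q<\infty$, so $\int\log f\,d\mu$ is well defined in $[-\infty,\infty)$ and Jensen holds in that extended sense (with the inequality trivial if the left side is $-\infty$).
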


\begin{cor}[Logarithmic Sobolev inequality]\label{COR:log.sob}
Let $m\geq n\geq 3$.
The logarithmic Sobolev inequality is as follows: 
\begin{equation}
    \label{EQ:log.sob(thm)}
  \int_{\R^n} \frac{|u|^2}{\|u\|_{L^2(\R^n,\mathbb{R}_{0,m})}^2}\log \left(\frac{|u|}{\|u\|_{L^2(\R^n,\mathbb{R}_{0,m})}} \right)\,dx \leq \frac{n}{2} \log \left(\frac{C_{1}\|Du\|_{L^2(\R^n,\mathbb{R}_{0,m})}}{\|u\|_{L^2(\R^n,\mathbb{R}_{0,m})}} \right)
\end{equation}
for all $u \in L^2(\BR^n,\mathbb{R}_{0,m}) \setminus \{0\}$, where $C_1$ is as in Remark \ref{REM:C1}.
The same result holds for $u \in L^2(\BR^n,\mathbb{C}_{m}) \setminus \{0\}$ with $K_m$ being replaced by the corresponding constant in the case of $\mathbb{C}_{m}$  as in \eqref{Knc}.
\end{cor}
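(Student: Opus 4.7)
The plan is to combine the two tools that have just been recorded: the logarithmic H\"older inequality stated in the lemma above, and the Dirac-Sobolev inequality $\|u\|_{L^{2n/(n-2)}} \leq C_1 \|Du\|_{L^2}$ guaranteed by Theorem \ref{THM:Sobolev2} / Remark \ref{REM:C1}. First, without loss of generality I assume $Du \in L^2(\R^n,\mathbb{R}_{0,m})$, since otherwise the right-hand side of \eqref{EQ:log.sob(thm)} equals $+\infty$ and there is nothing to prove; under this assumption Theorem \ref{THM:Sobolev2} ensures $u \in L^{2n/(n-2)}(\R^n,\mathbb{R}_{0,m})$, so the logarithmic H\"older inequality is applicable.

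I would apply the logarithmic H\"older inequality \eqref{holdernn} with the specific pair of exponents $p = 2$ and $q = \tfrac{2n}{n-2}$, which is the Sobolev dual of $2$ in dimension $n \geq 3$. A short calculation gives
\begin{equation*}
\frac{q}{q-p} \;=\; \frac{2n/(n-2)}{2n/(n-2)-2} \;=\; \frac{2n/(n-2)}{4/(n-2)} \;=\; \frac{n}{2},
\end{equation*}
which is exactly the constant appearing on the right-hand side of the desired inequality.

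Next, I would use $\log(|u|^2/\|u\|_{L^2}^2) = 2\log(|u|/\|u\|_{L^2})$ to rewrite the left-hand side of \eqref{holdernn} and divide the resulting inequality by $2$; this gives
\begin{equation*}
\int_{\R^n} \frac{|u|^2}{\|u\|_{L^2}^2} \log\!\left(\frac{|u|}{\|u\|_{L^2}}\right) dx \;\leq\; \frac{n}{2}\,\log\!\left(\frac{\|u\|_{L^{2n/(n-2)}}}{\|u\|_{L^2}}\right).
\end{equation*}
Finally, applying the Dirac-Sobolev inequality $\|u\|_{L^{2n/(n-2)}} \leq C_1 \|Du\|_{L^2}$ (with $C_1$ as in Remark \ref{REM:C1}) and invoking the monotonicity of the logarithm on $(0,\infty)$ upgrades the upper bound to the desired $\frac{n}{2}\log\!\big(C_1 \|Du\|_{L^2}/\|u\|_{L^2}\big)$. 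The very same argument works verbatim in the complexified setting $\mathbb{C}_m$, using the corresponding constant $K_m$ from \eqref{Knc} inside $C_1$.

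There is essentially no real obstacle here beyond bookkeeping: the argument is a two-line concatenation of the logarithmic H\"older inequality with the Dirac-Sobolev embedding. The mild care needed is (i) to check the trivial case $Du \notin L^2$ so that \eqref{holdernn} is used only when both $\|u\|_{L^2}$ and $\|u\|_{L^{2n/(n-2)}}$ are finite and nonzero, and (ii) to observe that because the Sobolev constant $C_1$ has already been made explicit, the logarithmic Sobolev constant $\tfrac{n}{2}$ in front together with the explicit $C_1$ inside the logarithm are automatic. Hence no further delicate estimate is required.
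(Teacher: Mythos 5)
Your proposal is correct and follows essentially the same route as the paper: apply the logarithmic H\"older inequality with $p=2$ and $q=\tfrac{2n}{n-2}$, note that $\tfrac{q}{q-2}=\tfrac{n}{2}$, and then invoke the Dirac-Sobolev inequality of Theorem \ref{THM:Sobolev2} (with $C_1$ as in Remark \ref{REM:C1}) together with monotonicity of the logarithm. Your extra remarks on the trivial case $Du\notin L^2$ and the factor-of-two bookkeeping are harmless and correct.
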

\begin{proof}
   From \eqref{holdernn} for $p=2$, $q \in (p, \infty)$ and $\mathbb{X}=\R^n$ we have
\[
\int_{\mathbb{R}^n} \frac{|u|^{2}}{\|u\|^{2}_{L^{2}}}\log\left(\frac{|u|^{2}}{\|u\|^{2}_{L^{2}}}\right)dx\leq \frac{q}{q-2}\log\left(\frac{\|u\|^{2}_{L^{q}}}{\|u\|^{2}_{L^{2}}}\right)\,.
\]
Choosing $q=\frac{2n}{n-2}$ by Theorem \ref{THM:Sobolev2} we can further estimate
\begin{eqnarray*}
    \int_{\mathbb{R}^n} \frac{|u|^{2}}{\|u\|^{2}_{L^{2}}}\log\left(\frac{|u|^{2}}{\|u\|^{2}_{L^{2}}}\right)dx & \leq & \frac{n}{2}\log \left(\frac{C_{1}^{2} \|Du\|^{2}_{L^2}}{\|u\|_{L^2}^2} \right)\,,
\end{eqnarray*}
and the proof is complete. 
\end{proof}
\begin{thm}\label{THM:Nash}
Let $m\geq n\geq 3$.
For  $u \in C_{0}^{\infty}(\R^n, \R_{0,m})$ or $u \in C_{0}^{\infty}(\R^n, \mathbb{C}_{m})$  the Nash inequality with respect to the Dirac operator $D$ is as follows
\begin{equation}
    \label{EQ:Nash(thm)}
    \|u\|_{L^2(\R^n,\mathbb{R}_{0,m})}^{1+2/n}\leq  C_{1}\|u\|_{L^1(\R^n,\mathbb{R}_{0,n})}^{2/n}\|Du\|_{L^2(\R^n,\mathbb{R}_{0,m})},
\end{equation}
where $C_1$ is as in Remark \ref{REM:C1}.
\end{thm}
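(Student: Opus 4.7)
The plan is to derive the Nash inequality from the Dirac-Sobolev inequality (Theorem~\ref{THM:Sobolev2}/Remark~\ref{REM:C1}) by a single interpolation step. Since the $L^p$ norms of Clifford-valued functions are defined via the scalar pointwise modulus $|u(x)|$, the usual H\"older-type interpolation
\[
\|u\|_{L^p} \le \|u\|_{L^{p_0}}^{\theta}\,\|u\|_{L^{p_1}}^{1-\theta},\qquad \frac{1}{p}=\frac{\theta}{p_0}+\frac{1-\theta}{p_1},
\]
holds without any extra Clifford constant; it is just $\int|u|^p=\int|u|^{p\theta}|u|^{p(1-\theta)}$ followed by scalar H\"older. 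This is the only nontrivial input besides Theorem~\ref{THM:Sobolev2}.

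First I would choose the triple $(p_0,p,p_1)=\bigl(1,\,2,\,\tfrac{2n}{n-2}\bigr)$, permissible since $n\geq 3$. A short calculation yields $\theta=\tfrac{2}{n+2}$ and $1-\theta=\tfrac{n}{n+2}$, so that
\[
\|u\|_{L^2(\R^n,\mathbb{R}_{0,m})} \le \|u\|_{L^1(\R^n,\mathbb{R}_{0,m})}^{\,2/(n+2)}\,\|u\|_{L^{2n/(n-2)}(\R^n,\mathbb{R}_{0,m})}^{\,n/(n+2)}.
\]
Next I would feed the Dirac-Sobolev inequality from Remark~\ref{REM:C1},
\[
\|u\|_{L^{2n/(n-2)}(\R^n,\mathbb{R}_{0,m})}\le C_1\|Du\|_{L^2(\R^n,\mathbb{R}_{0,m})},
\]
into the second factor to obtain
\[
\|u\|_{L^2(\R^n,\mathbb{R}_{0,m})} \le C_1^{n/(n+2)}\,\|u\|_{L^1(\R^n,\mathbb{R}_{0,m})}^{\,2/(n+2)}\,\|Du\|_{L^2(\R^n,\mathbb{R}_{0,m})}^{\,n/(n+2)}.
\]
Finally, raising both sides to the power $(n+2)/n$ and noting $(n+2)/n=1+2/n$ yields \eqref{EQ:Nash(thm)} with precisely the constant $C_1$ asserted in the statement. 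The $\mathbb{C}_m$ case is identical, since only the underlying scalar modulus $|u|$ enters the interpolation and the Dirac-Sobolev inequality has already been proved in that case.

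I do not anticipate a serious obstacle: the only thing to verify carefully is that Clifford-valued interpolation costs no extra multiplicative constant (so that the $K_m$ factor appearing in a naive use of the Clifford H\"older inequality does not creep in). This is the one subtle point, and it is handled by working pointwise with the scalar quantity $|u(x)|^2 = [u(x)\overline{u(x)}]_0$ before integrating, so that the classical scalar H\"older inequality applies directly.
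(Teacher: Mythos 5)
Your proof is correct, and it takes a genuinely different (and more direct) route than the paper's. The paper first establishes the logarithmic Sobolev inequality (Corollary~\ref{COR:log.sob}) from Theorem~\ref{THM:Sobolev2}, and then derives Nash from it via an application of Jensen's inequality with the convex function $f(r)=\log(1/r)$ and the probability measure $\frac{|u|^2}{\|u\|_{L^2}^2}\,dx$, followed by an exponentiation. You instead bypass the logarithmic machinery entirely and use the log-convexity of $L^p$ norms (interpolation of $L^2$ between $L^1$ and $L^{2n/(n-2)}$ with $\theta=\tfrac{2}{n+2}$), then plug in the Dirac--Sobolev bound. Your key observation -- that this interpolation is purely scalar, since it is applied to the pointwise modulus $|u(x)|$ rather than to Clifford products, so no $K_m$ factor enters -- is exactly right and is what makes the constant come out to $C_1$ on the nose. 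The two proofs yield the identical constant $C_1$. The paper's longer route is not wasted effort in context, since the log-Sobolev inequality is also needed there for the Gaussian log-Sobolev (Theorem~\ref{THM:gauss}) and the Poincar\'e inequality (Theorem~\ref{THM:poincare}); but for the Nash inequality taken in isolation, your interpolation argument is shorter and more elementary.
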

\begin{proof}
    Let us define the auxiliary function $f(r)=\log \left(\frac{1}{r} \right)$, for $r>0$. We have
    \begin{eqnarray}\label{EQ:nash.1}
        \log \left(\frac{\|u\|_{L^2}^2}{\|u\|_{L^1}} \right) & = & f \left( \frac{\|u\|_{L^1}}{\|u\|_{L^2}^2}\right)\nonumber\\
        & = & f \left( \int_{\R^n} \frac{1}{|u|}|u|^2 \frac{1}{\|u\|^{2}_{L^2}}\,dx\right)\nonumber\\
        & \leq & \int_{\R^n} f\left(\frac{1}{|u|} \right)\frac{|u|^2}{\|u\|_{L^2}^2}\,dx \nonumber\\
        & = &  \int_{\R^n} \log (|u|)\frac{|u|^2}{\|u\|_{L^2}^2}\,dx\,,
    \end{eqnarray}
    where we have applied Jensen's inequality for the function $f$ and for the probability measure $\frac{|u|^2}{\|u\|_{L^2}^2}\,dx$.
    
    Now,  the logarithmic Sobolev inequality \eqref{EQ:log.sob(thm)} implies
    \begin{equation}\label{EQ:nash.2}
        \int_{\R^n} \frac{|u|^2}{\|u\|_{L^2}^2}\log (|u|)\,dx \leq \log (\|u\|_{L^2})+\frac{n}{2} \log \left(C_{1}\frac{\|Du\|_{L^{2}}}{\|u\|_{L^2}}\right)\,.
    \end{equation}
    A combination of \eqref{EQ:nash.1} and \eqref{EQ:nash.2}, after rearrangement, gives
    \[
    \log \left( \|u\|_{L^2}\right)+\frac{n}{2}\log (\|u\|_{L^2}) \leq \frac{n}{2} \log (C_{1}\|u\|_{L^1}^{2/n}\|Du\|_{L^2})\,,
    \]
    which in turn implies
    \[
    \left(1+\frac{n}{2} \right) \log \left( \|u\|_{L^2}\right) \leq \frac{n}{2} \log (C_{1}\|u\|_{L^1}^{2/n}\|Du\|_{L^2})\,,
    \]
    and so, by the monotonicity of the logarithm, we conclude
    \[
    \|u\|_{L^2}^{1+2/n}\leq  C_{1}\|u\|_{L^1}^{2/n}\|Du\|_{L^2} \,,
    \]
    and the result follows.
\end{proof}
\begin{cor}
Let $m\geq n\geq 3$.
    Let $f_0 \in L^1(\R^n,\mathbb{R}_{0,m}) \cap L^2(\R^n,\mathbb{R}_{0,m})$ be such that $f_{0,j}\geq 0$ for all $j$. Then the solution to the heat equation 
    \[
    \partial_t f=D^2 f\,,\quad f(0,x)=f_0(x)\,,
    \]
    satisfies the estimate
    \[
      \|f(t,\cdot)\|_{L^2(\R^n,\mathbb{R}_{0,m})} \leq \left( \frac{16}{nC_1^2}  \|f_0\|_{L^1(\R^n,\mathbb{R}_{0,m})}^{-4/n} t+ \|f_0\|_{L^2(\R^n,\mathbb{R}_{0,m})}^{-4/n}\right)^{-n/4},
    \]
    where $C_1$ is as in Remark \ref{REM:C1}. The same estimate continues to be true in the case of  $f_0 \in L^1(\R^n,\mathbb{C}_{m}) \cap L^2(\R^n,\mathbb{C}_{m})$.
\end{cor}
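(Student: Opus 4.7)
The plan is to apply the classical Nash method of differential inequalities to $u(t) := \|f(t,\cdot)\|_{L^2(\R^n,\mathbb{R}_{0,m})}^2$, where the ingredients are: (i) an $L^1$-control coming from the componentwise positivity of $f_0$; (ii) the Nash inequality of Theorem \ref{THM:Nash}; and (iii) a standard ODE integration.

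\emph{Step 1 (L\textsuperscript{1}-control).} Since $D^2 = -\Delta$ acts diagonally on the Clifford basis $\{e_A\}$, the equation for $f$ decouples into scalar heat equations for each real component $f_A(t,x)$, with non-negative initial data $f_{0,A}\geq 0$. The scalar maximum principle gives $f_A(t,x)\geq 0$ for all $t>0$, and conservation of mass yields $\int_{\R^n}f_A(t,x)\,dx = \int_{\R^n}f_{0,A}(x)\,dx$. Using the elementary inequality $|f| = \sqrt{\sum_A f_A^2} \leq \sum_A f_A$ (valid since $f_A \ge 0$) together with mass conservation in each component, one arrives at a uniform bound of the form $\|f(t,\cdot)\|_{L^1(\R^n,\mathbb{R}_{0,m})} \leq \|f_0\|_{L^1(\R^n,\mathbb{R}_{0,m})}$.

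\emph{Step 2 (Differential inequality).} Differentiating $u(t)$ and integrating by parts (using $D^* = -D$, so that $\inner{D^2 f,f}=-\|Df\|_{L^2}^2$) gives
\[
u'(t) = -2\|Df(t,\cdot)\|_{L^2}^2.
\]
Squaring the Nash inequality \eqref{EQ:Nash(thm)} rearranges to $\|Df\|_{L^2}^2 \geq \|f\|_{L^2}^{2+4/n}/(C_1^2\|f\|_{L^1}^{4/n})$, and combining with the $L^1$-bound of Step 1 yields the closed, autonomous differential inequality
\[
u'(t) \leq -\frac{c}{\|f_0\|_{L^1}^{4/n}}\, u(t)^{1+2/n}
\]
for an explicit constant $c>0$ depending only on $n$ and $C_1$.

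\emph{Step 3 (ODE integration).} The inequality linearizes under the substitution $v(t):=u(t)^{-2/n}$, since $v'(t) = -\tfrac{2}{n}u(t)^{-2/n-1}u'(t)$; one obtains $v'(t) \geq \tfrac{2c}{n\|f_0\|_{L^1}^{4/n}}$. Integrating from $0$ to $t$, with initial datum $v(0)=\|f_0\|_{L^2}^{-4/n}$, gives a lower bound on $v(t)$; inverting back and taking a square root produces the decay estimate in the form stated, of type
\[
\|f(t,\cdot)\|_{L^2} \leq \left(\frac{K}{nC_1^2}\|f_0\|_{L^1}^{-4/n}\,t + \|f_0\|_{L^2}^{-4/n}\right)^{-n/4},
\]
where $K$ is the explicit constant obtained from tracking the factors in Steps 2 and 3.

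The main obstacle is Step 1: although each Clifford component evolves as a scalar heat kernel, the Clifford $L^1$-norm $\int |f|\,dx$ is \emph{not} the sum of component $L^1$-norms, so naive mass conservation does not directly translate into an $L^1$-bound for $f$. The positivity hypothesis $f_{0,A}\geq 0$ is precisely what bridges the gap through $\sqrt{\sum_A f_A^2}\leq \sum_A f_A$; once this bound is secured, Steps 2 and 3 are a routine Nash-type ODE argument.
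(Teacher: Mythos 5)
Your proposal follows exactly the same three-step Nash scheme as the paper: derive an $L^1$ bound on $f(t,\cdot)$ from componentwise positivity and mass conservation, plug it into the Nash inequality of Theorem~\ref{THM:Nash} to get a closed differential inequality for $u(t)=\|f(t,\cdot)\|_{L^2}^2$, and integrate the resulting ODE after the substitution $v=u^{-2/n}$. Steps 2 and 3 are correct and match the paper.

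The gap is in Step 1. The chain ``$|f(t,x)|\leq \sum_A f_A(t,x)$ plus conservation of $\int f_A$'' yields
\[
\|f(t,\cdot)\|_{L^1}\;\leq\;\int_{\R^n}\sum_A f_A(t,x)\,dx \;=\;\int_{\R^n}\sum_A f_{0,A}(x)\,dx\;=\;\sum_A\|f_{0,A}\|_{L^1},
\]
and this last quantity is \emph{not} $\|f_0\|_{L^1}$; in general $\|f_0\|_{L^1}\leq\sum_A\|f_{0,A}\|_{L^1}\leq 2^{n/2}\|f_0\|_{L^1}$. To close the loop one must also use the reverse Cauchy--Schwarz inequality $\sum_A a_A\leq 2^{n/2}\bigl(\sum_A a_A^2\bigr)^{1/2}$, which is precisely the second elementary inequality the paper invokes, producing the bound $\|f(t,\cdot)\|_{L^1}\leq 2^{n/2}\|f_0\|_{L^1}$ in \eqref{EQ:app.Nash.L1}. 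This factor of $2^{n/2}$ propagates (as $2^{-2}$ after raising to the power $-4/n$) into the explicit constant in front of $t$, so stating the $L^1$ bound without it does not allow you to ``track the factors'' in Step 3 as claimed. Incidentally, your cleaner bound $\|f(t,\cdot)\|_{L^1}\leq\|f_0\|_{L^1}$ is in fact true, but by an entirely different argument: Minkowski's integral inequality shows the heat semigroup is a contraction on the vector-valued space $L^1(\R^n;\R^{2^n})$, and this requires no positivity whatsoever. That would be a legitimate (and arguably simpler) alternative to the paper's route, but then the positivity hypothesis and the two counting inequalities in the paper's proof become unnecessary, and the resulting explicit constant differs from the one stated in the corollary.
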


\begin{proof}
    Since we assumed that $f_{0,j}(x) \geq 0$, by the positivity of the heat kernel $h_t$ of the Laplacian, we immediately see that $f_{j}(t,x)\geq 0$, for all $j$. Moreover, the corresponding integrals are equal since
    \[
    \int_{\R^n} f_j(t,x)\,dx=\int_{\R^n} \int_{\R^n} h_t(x-y)f_{0,j}(y)\,dy\,dx=\int_{\mathbb{R}^{n}} f_{0,j}(y)dy\,, 
    \]
    where we have applied Fubini's theorem and the fact that $\|h_t\|_{L^1}=1$. For the comparison of the $L^1$-norms of the Clifford-valued functions $f$ and $f_0$, recall the following inequalities for positive numbers
    \[
    \sum_{m=1}^{N}a_{m}\leq \sqrt{N}\left(\sum_{m=1}^{N}a_{m}^{2} \right)^{1/2}\,,
    \]
    \[
    \left(\sum_{m=1}^{N} a_{m}^{2} \right)^{1/2}\leq \sum_{m=1}^{N} a_m\,.
    \]
    Using these inequalities, we have 
    \begin{eqnarray*}
        \int_{\R^n} |f(t,x)|\,dx & = & \int_{\R^n} (\sum_{j=1}^{2^n}f_j(t,x)^{2})^{1/2}\,dx\\
        & \geq & 2^{-n/2}\int_{\R^n} \sum_{j=1}^{2^n} f_{j}(t,x)\,dx\\
        & = & 2^{-n/2}\int_{\R^n} \sum_{j=1}^{2^n} f_{0,j}(x)\,dx\\
         & \geq & 2^{-n/2} \int_{\R^n} \left(\sum_{j=1}^{2^n}f_{0,j}(x)^2 \right)^{1/2}\,dx\\
          & = & 2^{-n/2}\int_{\R^n} |f_{0}(x)|\,dx\,.
    \end{eqnarray*}
    We also have 
    \begin{eqnarray*}
        \int_{\R^n} |f(t,x)|\,dx & = & \int_{\R^n} (\sum_{j=1}^{2^n}f_j(t,x)^{2})^{1/2}\,dx\\
        & \leq & \int_{\R^n} \sum_{j=1}^{2^n} f_{j}(t,x)\,dx\\
        & = & \int_{\R^n} \sum_{j=1}^{2^n} f_{0,j}(x)\,dx\\
         & \leq & 2^{n/2} \int_{\R^n} \left(\sum_{j=1}^{2^n}f_{0,j}(x)^2 \right)^{1/2}\,dx\\
          & = & 2^{n/2}\int_{\R^n} |f_{0}(x)|\,dx\,.
    \end{eqnarray*}
    Summarising the above we get 
    \begin{equation}\label{EQ:app.Nash.L1}
        2^{n/2}\|f_0\|_{L^1} \geq  \|f(t,\cdot)\|_{L^1}\geq 2^{-n/2}\|f_0\|_{L^1}\,.
    \end{equation}
    Multiplying by $f$ the heat equation and integrating over $\mathbb{R}^n$ we get 
    \[
    \langle \partial_t f(t,\cdot),f(t,\cdot)\rangle_{L^2}+\langle D^2f(t,\cdot),f(t,\cdot)\rangle_{L^2}=0\,,
    \]
    which in turn implies that 
    \[
    \frac{d}{dt}\|f(t,\cdot)\|^{2}_{L^2}=-2\|D f(t,\cdot)\|_{L^2}^2\,.
    \]
    So
    \begin{eqnarray*}
      \frac{d}{dt}\|f(t,\cdot)\|^{2}_{L^2}  & = & -2\|D f(t,\cdot)\|_{L^2}^2\\
      &\stackrel{\eqref{EQ:Nash(thm)}}\leq  & -2 C_{1}^{-2}\|f(t,\cdot)\|_{L^2}^{4/n+2}\|f(t,\cdot)\|_{L^1}^{-4/n}\\
      & \stackrel{\eqref{EQ:app.Nash.L1}}\leq  & -2^{3} C_{1}^{-2}\|f(t,\cdot)\|_{L^2}^{4/n+2}\|f_0\|_{L^1}^{-4/n}\,.
    \end{eqnarray*}
    Integrating the latter on $t \geq 0$, we get that the solution to the heat equation satisfies the estimate 
    \[
    \|f(t,\cdot)\|_{L^2} \leq  \left(\frac{16}{n} C_{1}^{-2}  \|f_0\|_{L^1}^{-4/n} t+ \|f_0\|_{L^2}^{-4/n}\right)^{-n/4}\,,
    \]
    and the proof is complete.
\end{proof}
The next result is the extension of the classical Gaussian logarithmic Sobolev inequality that was originally proved by Gross in \cite{Gro75}, and later on was also extended by the same author on more general Lie groups \cite{Gro92}. This inequality was further investigated in other sub-Riemannian situations too, cf. \cite{CKR24}, ~\cite{GL22}. 
\begin{thm}\label{THM:gauss}
Let $m\geq n\geq 3$.
If $u \in H^1(\R^n,d\mu, \R_{0,m})$ is such that $\|u\|_{L^2(\mu,\mathbb{R}_{0,m})}=1$, where $d\mu:=k e^{-\frac{|x|^2}{2}}\,dx$, with $k=\left(\frac{C_1}{2}\sqrt{ne}\right)^{n}$ for $C_1$ as in Remark \ref{REM:C1}, is the Gaussian measure, then we have the following Gaussian logarithmic Sobolev inequality with respect to the Dirac operator $D$
\begin{equation}
    \label{EQ:gauss(thm)}
    \int_{\R^n} |u|^2 \log|u|\,d\mu \leq \int_{\R^n} |Du|^2\,d\mu\,.
\end{equation}
\end{thm}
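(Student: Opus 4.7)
The plan is to follow Gross's classical route: reduce the Gaussian logarithmic Sobolev inequality to the Euclidean one of Corollary~\ref{COR:log.sob} via a Gaussian change of variables, then optimize the constant in the resulting logarithm through a tangent-line inequality.

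First, I would introduce the rescaling
\[
v(x):=\sqrt{k}\,e^{-|x|^{2}/4}\,u(x),
\]
whose prefactor is chosen precisely so that $\|v\|_{L^{2}(\R^{n},dx)}^{2}=\int k\,e^{-|x|^{2}/2}|u|^{2}\,dx=\|u\|_{L^{2}(\mu)}^{2}=1$. Corollary~\ref{COR:log.sob} applied to $v$ then gives, with no extra normalization,
\[
\int_{\R^n}|v|^{2}\log|v|\,dx\leq\tfrac{n}{2}\log\bigl(C_{1}\|Dv\|_{L^{2}(\R^{n})}\bigr).
\]
Using $\log|v|=\log|u|+\tfrac{1}{2}\log k-\tfrac{|x|^{2}}{4}$ and $|v|^{2}\,dx=|u|^{2}\,d\mu$, the left-hand side translates to $\int|u|^{2}\log|u|\,d\mu+\tfrac{1}{2}\log k-\tfrac{1}{4}\int|x|^{2}|u|^{2}\,d\mu$.

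Next, I would compute $\|Dv\|_{L^{2}(\R^n)}^{2}$ in terms of $u$. Writing $\partial_{j}v=\sqrt{k}\,e^{-|x|^{2}/4}(\partial_{j}u-\tfrac{x_{j}}{2}u)$ and invoking the identity $\|Dv\|_{L^{2}}^{2}=\|\nabla v\|_{L^{2}}^{2}$ from the preliminary section (so that the cross-term in $|Dv|^2$ can be handled componentwise), I expand $|\partial_{j}u-\tfrac{x_{j}}{2}u|^{2}=|\partial_{j}u|^{2}-\tfrac{x_{j}}{2}\partial_{j}|u|^{2}+\tfrac{x_{j}^{2}}{4}|u|^{2}$ using $[\partial_{j}u\,\overline{u}+u\,\overline{\partial_{j}u}]_{0}=\partial_{j}|u|^{2}$, and integrate. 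The middle term yields, after an integration by parts against $e^{-|x|^{2}/2}$, exactly $\tfrac{n}{2}-\tfrac{1}{2}\int|x|^{2}|u|^{2}\,d\mu$, so assembling:
\[
\|Dv\|_{L^{2}(\R^{n})}^{2}=\int|Du|^{2}\,d\mu+\tfrac{n}{2}-\tfrac{1}{4}\int|x|^{2}|u|^{2}\,d\mu.
\]

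I then linearize the logarithm via the elementary inequality $\log A\leq\tau A-1-\log\tau$ for any $\tau>0$, applied to $\tfrac{n}{2}\log(C_{1}\|Dv\|_{L^{2}})=\tfrac{n}{4}\log(C_{1}^{2}\|Dv\|_{L^{2}}^{2})$. The optimal choice $\tau=4/(nC_{1}^{2})$ makes the coefficient of $\|Dv\|_{L^{2}}^{2}$ exactly one:
\[
\tfrac{n}{2}\log\bigl(C_{1}\|Dv\|_{L^{2}}\bigr)\leq\|Dv\|_{L^{2}}^{2}-\tfrac{n}{4}+\tfrac{n}{4}\log\tfrac{nC_{1}^{2}}{4}.
\]

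Finally, the explicit value $k=\bigl(\tfrac{C_{1}\sqrt{ne}}{2}\bigr)^{n}$ is tailor-made so that the residual constants collapse, namely
\[
-\tfrac{n}{4}+\tfrac{n}{4}\log\tfrac{nC_{1}^{2}}{4}-\tfrac{1}{2}\log k=-\tfrac{n}{2},
\]
while the two occurrences of $\tfrac{1}{4}\int|x|^{2}|u|^{2}\,d\mu$ (one from the LHS expansion, one from $\|Dv\|_{L^{2}}^{2}$) cancel exactly. What survives is the stated inequality $\int|u|^{2}\log|u|\,d\mu\leq\int|Du|^{2}\,d\mu$.

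The main obstacle I anticipate is the computation of $\|Dv\|_{L^{2}(\R^{n})}^{2}$: a naive expansion of $|Du-\tfrac{x}{2}u|^{2}$ produces a cross term $\int[Du\overline{xu}]_{0}\,d\mu$ whose Clifford noncommutativity is awkward to handle by direct integration by parts. The workaround is to pass to the componentwise representation via Plancherel, $\|Dv\|_{L^{2}}^{2}=\sum_{j}\|\partial_{j}v\|_{L^{2}}^{2}$, and integrate by parts scalar-by-scalar against the Gaussian density, which reduces the step to the familiar manipulation and brings out the correct $\tfrac{n}{2}$ and $-\tfrac{1}{4}\int|x|^{2}|u|^{2}\,d\mu$ contributions cleanly.
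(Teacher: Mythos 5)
Your overall route coincides with the paper's: same rescaling $v=\sqrt{k}\,e^{-|x|^2/4}u$, same application of Corollary~\ref{COR:log.sob}, same tangent-line linearization $\log r\leq r-1$, and the same collapse of constants tailored to $k=\bigl(\tfrac{C_1}{2}\sqrt{ne}\bigr)^n$. The only place where you deviate is in the computation of $\|Dv\|^2_{L^2(\R^n)}$ in terms of $u$, and that is precisely where a gap appears.

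You pass to the componentwise identity $\|Dv\|^2_{L^2}=\|\nabla v\|^2_{L^2}=\sum_j\int|\partial_ju-\tfrac{x_j}{2}u|^2\,d\mu$, expand, and integrate by parts. That calculation is correct, but its leading term is $\int_{\R^n}|\nabla u|^2\,d\mu$, where $|\nabla u|^2=\sum_{j,A}(\partial_ju_A)^2$, and you then silently replace it by $\int_{\R^n}|Du|^2\,d\mu$. These two integrals are \emph{not} equal. Pointwise, $|Du|^2-|\nabla u|^2$ is a sum of cross terms $\pm\,\partial_j u_A\,\partial_k u_B$ with $(j,A)\neq(k,B)$; for the Lebesgue measure these cross terms integrate to zero (that is exactly the content of the identity $\|Df\|_{L^2(dx)}=\|\nabla f\|_{L^2(dx)}$ in the preliminaries), but against the Gaussian density $e^{-|x|^2/2}$ they do not. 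A concrete two-dimensional check with only $u_\emptyset$ and $u_{12}$ nonzero gives $|Du|^2-|\nabla u|^2=2\bigl(\partial_1u_\emptyset\,\partial_2u_{12}-\partial_2u_\emptyset\,\partial_1u_{12}\bigr)$, which for $u_\emptyset=c_1x_2e^{-|x|^2/4}$, $u_{12}=c_2x_1e^{-|x|^2/4}$ equals $2c_1c_2\bigl(\tfrac{|x|^2}{2}-1\bigr)e^{-|x|^2/2}$ and has nonzero Gaussian integral $\propto c_1c_2$. So your derived formula is
\[
\|Dv\|^2_{L^2(\R^n)}=\int_{\R^n}|\nabla u|^2\,d\mu+\tfrac{n}{2}-\tfrac{1}{4}\int_{\R^n}|x|^2|u|^2\,d\mu,
\]
and feeding it into the rest of your argument yields $\int|u|^2\log|u|\,d\mu\leq\int|\nabla u|^2\,d\mu$, not the claimed $\int|Du|^2\,d\mu$. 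Since neither of $\int|\nabla u|^2\,d\mu$ and $\int|Du|^2\,d\mu$ dominates the other, this is not a harmless relabelling; the componentwise workaround changes the functional that comes out.

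The paper's own proof avoids your componentwise shortcut: it expands $Du=k^{-1/2}e^{|x|^2/4}\bigl(\tfrac{x}{2}v+Dv\bigr)$ directly, so the quantity under control is $\inner{Du,Du}_{L^2(\mu)}$ from the start, and the Clifford cross term $I_1+I_2$ is handled as a bilinear Clifford expression rather than traded for a scalar gradient term. If you want to salvage your approach you would have to compute $\int[Du\,\overline{xu}]_0\,d\mu$ in its Clifford form rather than route through $\|\nabla v\|^2$, because the Plancherel identity you invoke holds only against the Lebesgue measure.
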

The theorem holds also in the case of $u \in H^1(\R^n,d\mu, \mathbb{C}_{m})$ with the obvious modifications. 

\begin{proof}
   It is enough to assume that $u$ is regular, see \cite[Theorem 7.3]{CKR24} for the limiting argument. For $u$ as in the hypothesis, let us define $v$ by 
    \[
    v(x)=k^{1/2}e^{-\frac{|x|^2}{4}}u(x)\,.
    \]
    Then we have $\|v\|_{L^2(\R^n, \R_{0,m})}=1$. Applying the logarithmic Sobolev inequality \eqref{EQ:log.sob(thm)} we can estimate 
    \begin{eqnarray}
        \label{EQ:gauss.1}
        \int_{\R^n} |u(x)|^2 \log|u(x)|\, d\mu & = & \int_{\R^n} |v(x)|^2 \log |k^{-1/2}e^{\frac{|x|^2}{4}}v(x)|\,dx\nonumber\\
        & = & \int_{\R^n} |v(x)|^2 \log |v(x)|\,dx\nonumber\\
        & + & \log(k^{-1/2})+\int_{\R^n} \frac{|x|^2}{4}|v(x)|^2\,dx\nonumber\\
        & \leq & \frac{n}{4}\log \left(C_{1}^2 \int_{\R^n} |Dv(x)|^2\,dx\right)\nonumber\\
        & + & \log(k^{-1/2})+\int_{\R^n} \frac{|x|^2}{4}|v(x)|^2\,dx\,.
    \end{eqnarray}
    We have
   \begin{eqnarray*}
       Du & = & k^{-1/2} (De^{\frac{|x|^2}{4}})v+k^{-1/2}e^{\frac{|x|^2}{4}} (Dv)\\
       & = & k^{-1/2} \left(\frac{x}{2}e^{\frac{|x|^2}{4}}\right)v+k^{-1/2}e^{\frac{|x|^2}{4}} (Dv)\,.
   \end{eqnarray*}
   Hence
    \begin{eqnarray}\label{EQ:gauss.2}
    \langle Du, Du \rangle_{L^2(\mu)} & = & \langle k^{-1/2} (x/2)e^{\frac{|x|^2}{4}}v, k^{-1/2} (x/2)e^{\frac{|x|^2}{4}}v \rangle_{L^2(\mu)} \nonumber\\
    & + & \langle k^{-1/2}e^{\frac{|x|^2}{4}} Dv, k^{-1/2}e^{\frac{|x|^2}{4}} Dv \rangle_{L^2(\mu)} \nonumber\\
    & + & \langle  k^{-1/2}(x/2)e^{\frac{|x|^2}{4}}v, k^{-1/2}e^{\frac{|x|^2}{4}}Dv \rangle_{L^2(\mu)}\nonumber\\
    & + & \langle k^{-1/2}e^{\frac{|x|^2}{4}}Dv, k^{-1/2}(x/2)e^{\frac{|x|^2}{4}}v \rangle_{L^2(\mu)}\nonumber\\
    & = &  \int_{\R^n} |Dv(x)|^2\,dx+\int_{\R^n} \frac{|x|^2}{4}|v(x)|^2\,dx+I_1+I_2\,,
    \end{eqnarray}
    where
    \[
    I_1:=\langle  k^{-1/2}(x/2)e^{\frac{|x|^2}{4}}v, k^{-1/2}e^{\frac{|x|^2}{4}}Dv \rangle_{L^2(\mu)}= \langle  (x/2)v, Dv \rangle_{L^2(\R^n)}\,,
    \]
    and 
    \[
    I_2:=\langle k^{-1/2}e^{\frac{|x|^2}{4}}Dv, k^{-1/2}(x/2)e^{\frac{|x|^2}{4}}v \rangle_{L^2(\mu)}=\langle Dv, (x/2)v \rangle_{L^2(\R^n)}\,.
    \]
   Now, to calculate the sum $I_1+I_2$, recall the following fact by the general theory 
   \[
   2 {\rm Re} ( \langle  xv, Dv \rangle_{L^2(\R^n)})= \langle  xv, Dv \rangle_{L^2(\R^n)}+\langle Dv, xv \rangle_{L^2(\R^n)}\,.
   \]
  Observe that integrating by parts we get
   \[
   \sum_{j, A} \int_{\R^n} x_j v_A (\partial_j v_A) \,dx=-\sum_{j, A} \int_{\R^n} v_{A}^{2}\,dx-\sum_{j,A} \int_{\R^n} x_j (\partial_j v_A)v_A\,dx\,.
   \]
   Therefore, since $\|v\|^{2}_{L^2}=\sum_A \int_{\R^n}v_{A}^{2}dx=1$, we have 
   \begin{equation}
       \label{EQ:gauss.6}
        \sum_{j, A} \int_{\R^n} x_j v_A (\partial_j v_A) \,dx=-\frac{1}{2}\sum_{j,A}\int_{\R^n}v_{A}^{2}\,dx=-\frac{1}{2}\sum_{j,A}1=-\frac{n}{2}\,.
   \end{equation}
   We have
   \begin{eqnarray*}
      {\rm Re} ( \langle  xv, Dv \rangle_{L^2(\R^n)}) & = & {\rm Re} \left( \int_{\R^n} \sum_{j, A} \overline{e_je_A}x_j v_A  \sum_{k,B}e_ke_B \partial_k v_B\,dx\right) \\
      & = &  {\rm Re} \left( \int_{\R^n} \sum_{j, A,k,B} \overline{e_je_A}e_ke_Bx_j v_A  \partial_k v_B\,dx\right)\\
      & = & \sum_{j=k,A=B}\int_{\R^n} x_j v_A \partial_k v_B\,dx\\
      & = & \sum_{j,A} \int_{\R^n} x_j v_A \partial_j v_A\,dx\\
      & \stackrel{\eqref{EQ:gauss.6}}= & -\frac{n}{2}\,,
   \end{eqnarray*}
   which, by the above, implies that
   \[
   \langle  xv, Dv \rangle_{L^2}+\langle Dv, xv \rangle_{L^2}=-n\,,
   \]
   and it follows that 
   \begin{equation}\label{EQ:gauss.3}
        I_1+I_2=-\frac{n}{2}\,.
   \end{equation}
  Summarising, by \eqref{EQ:gauss.2} and \eqref{EQ:gauss.3} we get that 
  \begin{equation}
      \label{EQ:gauss.4}
       \langle Du, Du \rangle_{L^2(\mu)}=\int_{\R^n} |Dv(x)|^2\,dx+\int_{\R^n} \frac{|x|^2}{4}|v(x)|^2\,dx-\frac{n}{2}\,.
  \end{equation}
  A combination of \eqref{EQ:gauss.1} and \eqref{EQ:gauss.4}, yields that the desired estimate \eqref{EQ:gauss(thm)}, will follow once we show that 
  \[
  \frac{n}{4}\log \left(\int_{\R^n} C_{1}^2 |Dv(x)|^2\,dx\right)+\log(k^{-1/2})\leq \int_{\R^n}  |Dv(x)|^2\,dx-\frac{n}{2}\,,
  \]
  or that 
  \[
   \frac{n}{4}\log \left(\int_{\R^n} C_{1}^2 |Dv(x)|^2\,dx\right)+\log(k^{-1/2})+\log \left(e^{n/2} \right)\leq \int_{\R^n}  |Dv(x)|^2\,,
  \]
  and since by the properties of the logarithm
  \begin{equation*}
      \begin{split}
          \frac{n}{4}\log \left(\int_{\R^n} C_{1}^2 |Dv(x)|^2\,dx\right)&+\log(k^{-1/2})+\log \left(e^{n/2} \right) \\&
           =\frac{n}{4}\left[\log \left(\int_{\R^n} C_{1}^2 |Dv(x)|^2 k^{-2/n}e^2\,dx\right) \right]\,,
      \end{split}
  \end{equation*}
  the latter inequality is equivalent to showing that 
   \begin{equation}\label{EQ:gauss.5}
       \log \left(\int_{\R^n} C_{1}^2 k^{-2/n}e^2 |Dv(x)|^2\,dx\right)\leq \frac{4}{n}\int_{\R^n}  |Dv(x)|^2\,dx\,.
   \end{equation}
    Using the fact that for all $r>0$, we have $\log r\leq r-1$, we can see that \eqref{EQ:gauss.5} holds true if 
    \[
    e^{-1}C_{1}^2 k^{-2/n}e^{2}\int_{\R^n} |Dv(x)|^2\,dx \leq \frac{4}{n} \int_{\R^n} |Dv(x)|^2\,dx\,.
    \]
   This is true if $eC_{1}^2 k^{-2/n}=\frac{4}{n}$, that is, for $k=\left(\frac{C_1}{2}\sqrt{ne}\right)^{n}$, completing the proof. 
 %   Indeed, we have 
 %   \begin{equation*}
 %       \begin{split}
 %           %\log\left(C_{1}k^{-1/2}e\int_{\R^n} |Dv(x)|^2\,dx\right) &=  \log\left( e^{-1}C_{1}k^{-1/2}e\int_{\R^n} |Dv(x)|^2\,dx\right)+1\\&
  %      \leq C_{1}k^{-1/2}\\&
  %      \leq \frac{2}{n} \int_{\R^n} |Dv(x)|^2\,dx\,,
  %      \end{split}
  %  \end{equation*}
 %   where the latter inequality is valid as an equality for $k$ as in the hypothesis, and the proof is complete.
\end{proof}
Recall the following result that appeared e.g. in \cite[Section 1]{BCLSC}.
\begin{prop}\label{PROP:Nash.Sobolev.s}
    If $0< f \in C^{\infty}(\mathbb{R}^n)$ is a positive (real valued) and smooth function that satisfies  Nash inequality 
    \begin{equation}
        \label{EQ:Nash.s}
        \|f\|_{L^2(\R^n)}^{1+2/n}\leq C \|\nabla f\|_{L^2(\R^n)} \| f\|_{L^1(\R^n)}^{2/n}\,,
    \end{equation}
    then $f$ satisfies Sobolev inequality 
    \begin{eqnarray}
        \label{EQ:Sobolev.s}
        \|f\|_{L^{\frac{2n}{n-2}}(\R^n)}\leq C \|\nabla f\|_{L^2(\R^n)}\,,
    \end{eqnarray}
    where $\nabla f$ stands for the gradient of $f$, where the constant $C$ as in \eqref{EQ:Nash.s} and \eqref{EQ:Sobolev.s} is in principle not the same.
\end{prop}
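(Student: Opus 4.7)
My plan is to prove this via the classical heat-semigroup argument of Nash, further developed in \cite{BCLSC}. Let $P_t = e^{t\Delta}$ denote the Euclidean heat semigroup, which is positivity preserving and satisfies $\|P_t g\|_{L^1} \le \|g\|_{L^1}$ for $g \ge 0$. The strategy is: Nash $\Rightarrow$ ultracontractivity of $P_t$ $\Rightarrow$ Sobolev.

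\textbf{Step 1: Nash along the heat flow yields ultracontractivity.} Since $P_t f$ is smooth and positive for positive smooth $f$, the hypothesized Nash inequality applies to $u = P_t f$. Setting $\phi(t) := \|P_t f\|_{L^2(\mathbb{R}^n)}^2$, the standard identity $\phi'(t) = -2\|\nabla P_t f\|_{L^2}^2$ combined with \eqref{EQ:Nash.s} and the $L^1$ contraction yields
\[
\phi'(t) \le -\frac{2}{C^2\,\|f\|_{L^1}^{4/n}}\,\phi(t)^{1+2/n}.
\]
Separating variables and integrating gives $\phi(t) \le K_1 \|f\|_{L^1}^2 t^{-n/2}$ for a dimensional constant $K_1$; equivalently $\|P_t\|_{L^1 \to L^2} \le K_1^{1/2} t^{-n/4}$. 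By self-adjointness of $P_t$, duality gives $\|P_t\|_{L^2 \to L^\infty} \le K_1^{1/2} t^{-n/4}$, and composition $P_t = P_{t/2}\,P_{t/2}$ yields the ultracontractive bound $\|P_t\|_{L^1 \to L^\infty} \le K_2 t^{-n/2}$, equivalently a pointwise heat-kernel bound $h_t(x,y) \le K_2 t^{-n/2}$.

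\textbf{Step 2: Ultracontractivity implies Sobolev.} Use the subordination formula
\[
(-\Delta)^{-1/2} g = \frac{1}{\sqrt{\pi}} \int_0^\infty t^{-1/2} P_t g\,dt.
\]
Combining the bound $h_t(x,y) \le K_2 t^{-n/2}$ (from Step~1) with the Euclidean Gaussian decay of the heat kernel, one obtains the Riesz kernel estimate $[(-\Delta)^{-1/2}](x,y) \le C_n |x-y|^{-(n-1)}$. The classical Hardy-Littlewood-Sobolev inequality (Theorem~\ref{THM:Sobolev1}) with $\lambda = n-1$, $p = \tfrac{2n}{n+2}$, $q = 2$ then gives $\|(-\Delta)^{-1/2} g\|_{L^2(\mathbb{R}^n)} \le C' \|g\|_{L^{2n/(n+2)}(\mathbb{R}^n)}$. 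Dualizing yields $\|f\|_{L^{2n/(n-2)}(\mathbb{R}^n)} \le C' \|(-\Delta)^{1/2} f\|_{L^2(\mathbb{R}^n)} = C' \|\nabla f\|_{L^2(\mathbb{R}^n)}$, which is \eqref{EQ:Sobolev.s}.

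\textbf{Main obstacle.} The ODE manipulation is routine. The delicate point is the last step, where one must extract Sobolev from ultracontractivity without circular use of Sobolev. On $\mathbb{R}^n$ the explicit Gaussian form of $h_t$ makes this transparent, as it yields the sharp $|x-y|^{-(n-1)}$ kernel bound directly. In a more abstract setting one would replace this with the truncation/Marcinkiewicz-type interpolation used in \cite{BCLSC}, which relies only on the ultracontractive norm estimate and avoids any pointwise kernel formula.
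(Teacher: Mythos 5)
The paper offers no proof of this proposition; it is stated as a recalled fact with a citation to Bakry--Coulhon--Ledoux--Saloff-Coste, so there is nothing internal to compare against. Your strategy (Nash along the heat flow $\Rightarrow$ the ODE $\phi'\le -c\,\phi^{1+2/n}$ $\Rightarrow$ $\|P_t\|_{L^1\to L^2}\lesssim t^{-n/4}$ $\Rightarrow$ ultracontractivity $\Rightarrow$ Sobolev) is exactly the mechanism of that reference, and your Step~1 is correct and self-contained: the Nash hypothesis is indeed applied to $u=P_tf$, which is legitimate under the intended universal reading of the proposition (Nash for all positive smooth $u$ implies Sobolev for all such $u$), confirmed by how Remark~\ref{REM:Nash.Sobolev} uses it.

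The weak point is Step~2. Invoking ``the Euclidean Gaussian decay of the heat kernel'' to upgrade the on-diagonal bound $h_t\lesssim t^{-n/2}$ to the Riesz kernel bound $|x-y|^{-(n-1)}$ makes the argument circular in spirit: if you allow yourself the explicit Gaussian form of $h_t$ on $\R^n$, then $(-\Delta)^{-1/2}$ is the classical Riesz potential outright, Sobolev follows from Hardy--Littlewood--Sobolev with no reference to Nash, and Step~1 does no work. To produce a genuine implication you should carry out the truncation you mention only in passing: split $(-\Delta)^{-1/2}g=\frac{1}{\sqrt\pi}\bigl(\int_0^T+\int_T^\infty\bigr)t^{-1/2}P_tg\,dt$; bound the tail in $L^\infty$ using $\|P_tg\|_\infty\lesssim t^{-n/(2p)}\|g\|_p$ (which follows by interpolating the $L^1\to L^\infty$ ultracontractive bound of Step~1 with the $L^\infty$-contraction of $P_t$, no kernel formula needed); bound the head in $L^p$ using $\|P_t\|_{L^p\to L^p}\le1$; optimize over $T$ to obtain the weak-type $(p,q)$ bound with $1/q=1/p-1/n$; and finish by Marcinkiewicz interpolation, taking $p=\frac{2n}{n+2}$, $q=2$, and dualizing. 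With that substitution your proof is complete and coincides with the argument in \cite{BCLSC}.
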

\begin{rem}\label{REM:Nash.Sobolev}
    One can show that if Nash inequality 
    \begin{equation}\label{EQ:NaschC}
        \|f\|_{L^2(\R^n,\R_{0,m})}^{1+2/n}\leq C \|D f\|_{L^2(\R^n,\R_{0,m})} \| f\|_{L^1(\R^n,\R_{0,m})}^{2/n}\,,
    \end{equation}
    is satisfied for all Clifford-valued $f$, then the Sobolev inequality 
   \begin{equation}\label{EQ:SobolevC}
       \|f\|_{L^{\frac{2n}{n-2}}(\R^n,\R_{0,n})}\leq C' \|D f\|_{L^2(\R^n,\R_{0,n})}\,, 
   \end{equation}
    is satisfied for all Clifford-valued $f$.  Indeed, observe that Proposition \ref{PROP:Nash.Sobolev.s} can be extended to all real-valued $f \in C^{\infty}(\mathbb{R}^n)$ in view of the equality $|\nabla |f||=|\nabla f|$. Now suppose that inequality \eqref{EQ:NaschC} is satisfied for all Clifford-valued $f$. Then using Proposition \ref{PROP:Nash.Sobolev.s} we can estimate 
    \begin{eqnarray*}
        \|f\|_{L^{\frac{2n}{n-2}}(\R^n,\R_{0,m})} & \lesssim & \sum_{A} \|f_A\|_{L^{\frac{2n}{n-2}}(\R^n,\R_{0,m})}\\
        & \lesssim & \sum_{A} \| \nabla f_A\|_{L^2(\R^n,\R_{0,m})}\\
        & = & \sum_{A} \| D f_A\|_{L^2(\R^n,\R_{0,m})}\\
        & \lesssim & \|D f\|_{L^2(\R^n,\R_{0,m})}\,,
    \end{eqnarray*}
    where we have used the fact that for the real-valued function $f_A$ we have $Df_A=\nabla f_A$, and the  notation $ a\lesssim b$ to denote that there exists some constant $C \in \mathbb{R}$ for which we have $a \leq Cb$.
\end{rem}

Finally, we derive the generalised Poincar\'e inequality. 

\begin{thm}\label{THM:poincare}
Let $m\geq n\geq 3$.
For $v \in L^2(\R^n, d\mu, \R_{0,m})$ such that $\|v\|_{L^2(\mu,\R_{0,m})}=1$ we have
\begin{equation}
    \label{EQ:poincare1(thm)}
    \int_{\R^n} |v|^2\,d\mu-\left(\int_{\R^n} |v|^{2/q}\,d\mu \right)^q\leq 2(q-1) \int_{\R^n} |Dv|^2\,d\mu\,,
\end{equation}
for all $q\geq 1$, where the Gaussian measure $\mu$ is an in Theorem \ref{THM:gauss}. %\textcolor{blue}{In particular, for  $v \in L^2(\R^n, d\mu, \R_{0,n})$ we get 
%\begin{equation}
%\label{EQ:poincare2(thm)}
%    \int_{\R^n} (v-\mu(v))^2\,d\mu \lesssim  \int_{\R^n} |Dv|^2\,d\mu\,,
%\end{equation}
%where we have used the  notation $\mu(v):=\int_{\R^n} v\,d\mu$.}\textcolor{red}{I do not think that it makes sense to add the blue part. We don't get any information.}
\end{thm}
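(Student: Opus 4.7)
The plan is to derive the Poincaré-type inequality \eqref{EQ:poincare1(thm)} from the Gaussian logarithmic Sobolev inequality (Theorem~\ref{THM:gauss}) by a convexity/tangent-line argument, in the spirit of Beckner. Define
\[
\phi(q) := \left(\int_{\R^n}|v|^{2/q}\,d\mu\right)^q = \|v\|_{L^{2/q}(\mu,\R_{0,m})}^2, \qquad q\ge 1.
\]
Since $\|v\|_{L^2(\mu)}=1$, one has $\phi(1)=1$, and the target estimate rewrites as $\phi(1)-\phi(q)\le 2(q-1)\int|Dv|^2\,d\mu$.

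The central step is to verify that $\phi$ is convex on $[1,\infty)$. The classical fact that $t\mapsto \log\int|v|^t\,d\mu$ is convex on $(0,\infty)$, which follows from Cauchy–Schwarz applied to the probability measure $|v|^t\,d\mu/\int|v|^t\,d\mu$, implies by a short computation that $p\mapsto \log\|v\|_{L^p(\mu)}$ is convex in $1/p$. Specialising to $p=2/q$, for which $1/p=q/2$ is linear in $q$, shows that $q\mapsto \log\phi(q)=2\log\|v\|_{L^{2/q}(\mu)}$ is convex in $q$, so $\phi$ is log-convex, and in particular convex, on $[1,\infty)$. Note that only the scalar quantity $|v|$ enters this argument, so the Clifford-valued nature of $v$ causes no additional difficulty.

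Next, I compute $\phi'(1)$. Writing $\phi(q)=A(q)^q$ with $A(q)=\int|v|^{2/q}\,d\mu$, so that $A(1)=1$ and $A'(1)=-2\int|v|^2\log|v|\,d\mu$, a direct differentiation yields
\[
\phi'(1)=\phi(1)\bigl[\log A(1)+A'(1)/A(1)\bigr]=-2\int_{\R^n}|v|^2\log|v|\,d\mu,
\]
which under the normalization $\|v\|_{L^2(\mu)}=1$ equals $-\mathrm{Ent}_\mu(|v|^2)$. Convexity of $\phi$ then gives the tangent-line bound $\phi(q)\ge \phi(1)+(q-1)\phi'(1)$ for all $q\ge 1$, equivalently
\[
\int_{\R^n}|v|^2\,d\mu - \phi(q)\le 2(q-1)\int_{\R^n}|v|^2\log|v|\,d\mu.
\]
Applying the Gaussian logarithmic Sobolev inequality \eqref{EQ:gauss(thm)} to the normalized $v$ bounds the right-hand side by $2(q-1)\int_{\R^n}|Dv|^2\,d\mu$, which is exactly \eqref{EQ:poincare1(thm)}.

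The only nontrivial step is the convexity of $\phi$; once that is in place, the rest is a computation of $\phi'(1)$ together with a single application of Theorem~\ref{THM:gauss}. An alternative route — differentiating $\phi$ at a generic $q$ and applying the log-Sobolev inequality to $w=|v|^{1/q}$ — runs into the awkward pointwise comparison of $|\nabla|v||$ and $|Dv|$, so the convexity approach is decisively cleaner.
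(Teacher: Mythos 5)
Your proof is correct and follows essentially the same route as the paper's: both establish that $\phi(q)=\bigl(\int|v|^{2/q}\,d\mu\bigr)^q$ is convex (the paper by computing $m''(q)\geq 0$ for $m=\log\phi$ via Cauchy--Schwarz, you by invoking the classical log-convexity of the $L^p$-norm in $1/p$), compute $\phi'(1)=-2\int|v|^2\log|v|\,d\mu$, and then exploit convexity---you via the tangent-line bound at $q=1$, the paper via monotonicity of the slope function $\psi(q)=(\phi(1)-\phi(q))/(1-q)$ combined with l'H\^opital---before invoking the Gaussian log-Sobolev inequality \eqref{EQ:gauss(thm)}. The two convexity arguments rest on the same underlying Cauchy--Schwarz estimate on the probability measure $|v|^{2/q}\,d\mu / \int|v|^{2/q}\,d\mu$, so the difference is cosmetic rather than a genuinely different approach.
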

The above theorem is still valid in the case of  $v \in L^2(\R^n, d\mu, \mathbb{C}_{m})$.

\begin{proof}
    For $v$ as in the hypothesis, we consider the function $m: (1, \infty) \rightarrow \R$ defined by \[m=m(q):= q \log (\int_{\R^n} |v|^{2/q}\,d\mu):=q \log I(q)\,,\]
    where 
    \[
    I(q)=\int_{\R^n} |v|^{2/q}\,d\mu\,.
    \]
    Then 
    \[
      I'(q)=\int_{\R^n} \log |v||v|^{\frac{2}{q}}\left(-\frac{2}{q^2}\right)\,d\mu\,,
    \]
    and 
    \[
    I''(q)=\int_{\R^n} \left((\log|v|)^{2}|v|^{\frac{2}{q}}\left(-\frac{2}{q^2}\right)^{2}+\log |v||v|^{\frac{2}{q}}\left(\frac{4}{q^3}\right)\right)\,d\mu\,.
    \]
    Moreover since 
     \[
   m'(q)=\log I(q)+\frac{q I'(q)}{I(q)}\,,
   \]
   we get that 
   \begin{equation}
   \begin{split}
       m''(q)&=\frac{I(q)(2I'(q)+qI''(q))-q(I'(q))^{2}}{I^{2}(q)}\\&
       =\frac{q}{I^{2}(q)}\Biggl{[}\left(\int_{\R^n}|v|^{\frac{2}{q}}d\mu\right)\left(\int_{\R^n} (\log|v|)^{2}|v|^{\frac{2}{q}}\left(-\frac{2}{q^2}\right)^{2}d\mu\right)\\&
       -\left(\int_{\R^n} \log |v||v|^{\frac{2}{q}}\left(-\frac{2}{q^2}\right)\,d\mu\right)^{2}\Biggl{]}.
   \end{split}
   \end{equation}
   The latter shows that $m$ is a convex function since by the Cauchy-Schwarz inequality we have 
   \[
   \left( \int_{\R^n} |v|^{\frac{2}{q}}\,d\mu\right)\left(\int_{\R^n} (\log |v|)^2 |v|^{\frac{2}{q}}\left(-\frac{2}{q^2}\right)^2\,d\mu \right) \geq \left(\int_{\R^n} \log |v||v|^{\frac{2}{q}}\left(-\frac{2}{q^2}\right)\,d\mu \right)^2\,.
   \]
   Therefore the function $q \mapsto e^{m(q)}$ is also convex, and so the function $\psi:[1,\infty) \rightarrow \R$ defined via
    \[
   \psi(q):= \frac{e^{m(1)}-e^{m(q)}}{1-q}=\frac{\int_{\R^n} |v|^2\,d\mu-\left( \int_{\R^n} |v|^{\frac{2}{q}}\,d\mu\right)^q}{1-q}\,,
   \]
   is monotonically non-decreasing, and we have $\psi(q)\geq \lim_{q \rightarrow 1+}\psi(q)$.  
   We have 
   \begin{eqnarray*}
       \frac{d}{dq}\left(\int_{\R^n} |v|^{\frac{2}{q}}\,d\mu \right)^q & = &
       \left(\int_{\R^n} |v|^{\frac{2}{q}}\,d\mu \right)^q \frac{d}{dq} \left[q \log \left(\int_{\R^n} |v|^{2/q}\,d\mu \right) \right] \\
      & = &  \left(\int_{\R^n} |v|^{\frac{2}{q}}\,d\mu \right)^q \left[\log \left(\int_{\R^n} |v|^{2/q}\,d\mu \right)-\frac{2}{q}\frac{\int_{\R^n} \log |v||v|^{2/q}d\mu}{\int_{\R^n} |v|^{2/q}d\mu}  \right]\,.
   \end{eqnarray*}
   Hence using the l'H\^{o}pital rule we can compute 
   \begin{eqnarray*}
     \lim_{q \rightarrow 1}\frac{d}{dq}\left(\int_{\R^n} |v|^{\frac{2}{q}}\,d\mu \right)^q & = & \int_{\R^n} |v|^2d\mu \cdot \log \left(\int_{\R^n} |v|^2d\mu \right)-2\int_{\R^n} \ |v|^2 \log|v|\,d\mu\\
     & = & -2\int_{\R^n} \ |v|^2 \log|v|\,d\mu\,,
   \end{eqnarray*}
   since $\|v\|_{L^2(\mu)}=1$, and we have proved that  $\lim\limits_{q \rightarrow 1}\psi(q)=-2 \int_{\R^n} \ |v|^2 \log|v|\,d\mu$.
   Summarising the above we get 
   \[
    \int_{\R^n} |v|^2\,d\mu-\left( \int_{\R^n} |v|^{\frac{2}{q}}\,d\mu\right)^q \leq 2(q-1) \int_{\R^n} |v|^2 \log |v|\,d\mu\,,
   \]
    and inequality \eqref{EQ:poincare1(thm)} follows by an application of the Gaussian logarithmic Sobolev inequality \eqref{EQ:gauss(thm)} to estimate the right-hand side of the latter inequality.
    \end{proof}

\section{Zero modes}\label{SEC:zero}
The aim of this section is to establish estimates for the zero modes of the equation 
\begin{equation}\label{star}
    (D+Q)\psi=0\,, \quad {\rm in}\, \quad\B\,
\end{equation}
where $Q\in L^n(\mathbb{R}^n, \mathbb{C}_{n+1})$. While in~\cite{BES08} the authors consider $Q\in L^3(\R^3, \mathbb{C}^{4\times 4})$ the physically important situation is the case of $Q\in L^3(\mathbb{R}^3, \mathbb{C}_4)\subset L^3(\mathbb{R}^3,\mathbb{C}^{4\times 4})$ (see, e.g.,~\cite{Hile90}), the embeddding being given by the Dirac matrices.
We will see that the obtained results not only generalise and improve the ones obtained in \cite{BES08}, but also simplify their proofs. Among other things, we will not require any regularity of the potential $Q$, that was required in \cite{BES08} for their method to work. We also obtain better decay rates of zero modes.

\begin{thm}\label{THM:zero}
    Suppose $n\geq 3$ and  let $\psi \in L^2(\R^n, \mathbb{C}_{n+1})$ satisfy the equation
    \begin{equation}\label{EQ:zeromodes}
    D\psi=-Q\psi\,\quad {\rm in} \quad \R^n\,,
    \end{equation}
    for some potential $Q=Q(x)$ such that $Q(x)=O(\langle x\rangle ^{-1})$.  Then we have 
    \[
    \||x|\psi\|_{L^{\frac{2n}{n-2}}(\R^n,\mathbb{C}_{n+1})}<\infty\,.
    \]
\end{thm}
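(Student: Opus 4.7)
The strategy is to apply the Dirac--Sobolev inequality of Theorem \ref{THM:Sobolev2} directly to the weighted function $|x|\psi$, after regularising by a compactly supported cut-off. Let $\chi_R\in C_0^\infty(\mathbb{R}^n)$ satisfy $\chi_R\equiv 1$ for $|x|\le R$, $\chi_R\equiv 0$ for $|x|\ge 2R$, and $|D\chi_R|=|\nabla\chi_R|\le C/R$, and set $f_R:=\chi_R|x|\psi$. Since $\chi_R|x|$ is scalar-valued, the Leibniz rule for $D$ reduces to $D(\chi_R|x|\psi)=D(\chi_R|x|)\psi+\chi_R|x|D\psi$; using $D|x|=x/|x|$ together with the equation $D\psi=-Q\psi$, this yields
\[
Df_R=(D\chi_R)|x|\psi+\chi_R\tfrac{x}{|x|}\psi-\chi_R|x|Q\psi.
\]

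Each term will be controlled by a pointwise multiple of $|\psi|$, uniformly in $R$. The first term is supported in the annulus $R\le|x|\le 2R$, where $|D\chi_R||x|\le 2C$. For the second term, $x/|x|$ is a unit vector in $\mathbb{R}^n\hookrightarrow\mathbb{C}_{n+1}$, so by the sharper vector multiplication rule $|ab|\le|a||b|$ valid when one factor is a vector, we have $|(x/|x|)\psi|\le|\psi|$. For the third term, the decay assumption $|Q(x)|\le C'\langle x\rangle^{-1}$ together with $|x|\le\langle x\rangle$ gives $|x||Q(x)|\le C'$ pointwise, and the general Clifford estimate $|ab|\le K_{n+1}|a||b|$ then yields $||x|Q\psi|\le K_{n+1}C'|\psi|$. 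Collecting, we obtain the pointwise bound $|Df_R|\le M|\psi|$ with $M:=2C+1+K_{n+1}C'$ independent of $R$, so that $\|Df_R\|_{L^2(\mathbb{R}^n,\mathbb{C}_{n+1})}\le M\|\psi\|_{L^2}<\infty$.

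Since $f_R$ is compactly supported with $f_R,Df_R\in L^2$, it belongs to the Dirac--Sobolev space $W^1_2(\mathbb{R}^n,\mathbb{C}_{n+1})$ and can be approximated by Schwartz functions, so Theorem \ref{THM:Sobolev2} applies and gives
\[
\|f_R\|_{L^{\frac{2n}{n-2}}(\mathbb{R}^n,\mathbb{C}_{n+1})}\le C_1\|Df_R\|_{L^2(\mathbb{R}^n,\mathbb{C}_{n+1})}\le C_1 M\|\psi\|_{L^2}.
\]
As $R\to\infty$ we have $f_R(x)\to|x|\psi(x)$ for every $x$, since $\chi_R(x)=1$ whenever $R>|x|$, and Fatou's lemma yields
\[
\||x|\psi\|_{L^{\frac{2n}{n-2}}(\mathbb{R}^n,\mathbb{C}_{n+1})}\le\liminf_{R\to\infty}\|f_R\|_{L^{\frac{2n}{n-2}}(\mathbb{R}^n,\mathbb{C}_{n+1})}\le C_1 M\|\psi\|_{L^2}<\infty,
\]
which is the desired estimate.

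The only point requiring care is a clean bookkeeping of Clifford multiplication constants: one must use the sharper bound $|ab|\le|a||b|$ precisely for the vector factor $x/|x|$ (otherwise the $\tfrac{x}{|x|}\psi$ term would produce no information) and the general bound involving $K_{n+1}$ for the generic factor $Q$. Once this is in place, the argument reduces to the cut-off-and-Fatou scheme combined with the Dirac--Sobolev inequality already established, and the regularity assumption on $Q$ used in \cite{BES08} plays no role.
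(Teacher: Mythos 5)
Your proof follows essentially the same route as the paper's: apply the Dirac--Sobolev inequality of Theorem \ref{THM:Sobolev2} to $|x|\psi$, expand $D(|x|\psi)$ via the Leibniz rule with the scalar weight $|x|$ (so $D|x|=x/|x|$ is a unit vector and the extra term is pointwise bounded by $|\psi|$), substitute $D\psi=-Q\psi$, and conclude from $|x|\langle x\rangle^{-1}\le 1$. The cutoff-and-Fatou regularisation you add is a welcome technical refinement that the paper leaves implicit, but the underlying argument is identical.
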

\begin{proof}
By Sobolev inequality (Theorem \ref{THM:Sobolev2}) we have
    \begin{eqnarray*}
         \||x|\psi\|_{L^{\frac{2n}{n-2}}} & \leq & C_{1} \|D(|x|\psi)\|_{L^2}\\
         & \leq & C_{1} (\||x|D\psi\|_{L^2}+\|\psi\|_{L^2})\\
         & = & C_{1} (\||x|Q\psi\|_{L^2}+\|\psi\|_{L^2})\\
          & \lesssim & C_{1} (\||x|\langle x \rangle^{-1}\psi\|_{L^2}+\|\psi\|_{L^2})<\infty\,,
    \end{eqnarray*}
    where $C_1$ is as in the Remark \ref{REM:C1}, and the result follows.
\end{proof}
\begin{cor}\label{COR:zero1}
Let $\B$ be the complement of the unit ball $B_1 \subset \R^3$ and  $\psi \in L^2(\B, \mathbb{C}_4)$ be the solution to the equation \eqref{EQ:zeromodes}, for $Q$ such that $Q(x)=O(\langle x\rangle ^{-1})$ in $\B$. Then for any $0<k<6$, and $\alpha>3+k/2$, the integral 
\begin{equation}\label{EQ:cor.zero,1}
     \int_{B_{1}^{c}} |\psi(x)|^{k}|x|^{2k-\alpha}\,dx<\infty\,,
\end{equation}
   converges.
\end{cor}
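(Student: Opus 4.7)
The plan is to reduce the weighted integral estimate to the $L^6$-bound supplied by Theorem \ref{THM:zero} (which, for $n=3$, states $\||x|\psi\|_{L^6(\R^3,\mathbb{C}_4)}<\infty$) by means of a single application of Hölder's inequality in which the weight $|x|^{2k-\alpha}$ is split into a part that matches the $L^6$-integrable quantity $(|x||\psi|)^k$ and a remainder which is a pure radial power integrable at infinity.

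More precisely, I would write
\[
|\psi(x)|^{k}|x|^{2k-\alpha} = \bigl(|x||\psi(x)|\bigr)^{k}\,|x|^{k-\alpha},
\]
and apply Hölder's inequality on $\B$ with the conjugate exponents $p=6/k$ and $q=6/(6-k)$, which are both in $(1,\infty)$ precisely when $0<k<6$. This yields
\[
\int_{\B}|\psi(x)|^{k}|x|^{2k-\alpha}\,dx \le \Bigl(\int_{\B}\bigl(|x||\psi(x)|\bigr)^{6}\,dx\Bigr)^{k/6}\Bigl(\int_{\B}|x|^{6(k-\alpha)/(6-k)}\,dx\Bigr)^{(6-k)/6}.
\]
The first factor is finite by Theorem \ref{THM:zero} applied with $n=3$.

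The second factor reduces to a radial integral on $\B\subset\R^{3}$, hence is finite if and only if the exponent $6(k-\alpha)/(6-k)$ is strictly less than $-3$. A short computation shows this is equivalent to $\alpha>3+k/2$, which is exactly the hypothesis. Combining these two facts gives the claim.

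I do not anticipate a real obstacle here: the only subtle point is checking that the hypotheses $Q(x)=O(\langle x\rangle^{-1})$ and $\psi\in L^{2}$ used in Theorem \ref{THM:zero} on $\R^{n}$ transfer to the exterior domain $\B$, but since the claimed integral is only over $\B$ we may harmlessly extend $\psi$ by zero (or restrict all inequalities to $\B$) and work with the Sobolev estimate as stated; all constants and finiteness of $\||x|\psi\|_{L^{6}}$ carry over to the restriction. The endpoint case $k=6$, excluded by the statement, is the one in which Theorem \ref{THM:zero} is directly applied and no Hölder splitting is needed.
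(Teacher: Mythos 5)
Your argument is the same as the paper's: both rewrite the integrand as $(|x||\psi|)^k|x|^{k-\alpha}$, apply H\"older with exponents $6/k$ and $6/(6-k)$, control the first factor by Theorem \ref{THM:zero} with $n=3$, and reduce the second factor to the convergence of a radial integral over $\B$ whose exponent is below $-3$ exactly when $\alpha>3+k/2$. One small caution on your side remark: extending $\psi$ by zero outside $\B$ would destroy the PDE across $\partial B_1$, so it is cleaner to simply say (as the paper implicitly does) that the conclusion $\||x|\psi\|_{L^6}<\infty$ from Theorem \ref{THM:zero} restricts to $\B$.
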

\begin{proof}
    We have
    \begin{eqnarray*}
        \int_{B_{1}^{c}} |\psi(x)|^{k}|x|^{2k-\alpha}\,dx & = &  \int_{B_{1}^{c}} |\psi^{k}(x)||x|^{k}|x|^{k-\alpha}\,dx\\
        & \leq & \left(\int_{\B} |\psi(x)|^{pk}|x|^{pk}\,dx\right)^{1/p} \left(\int_{\B}|x|^{(k-\alpha)\frac{p}{p-1}} \,dx\right)^{\frac{p-1}{p}}\\
        & = & \||\psi||x|\|_{L^{pk}}^{k}\left(\int_{\B}|x|^{(k-\alpha)\frac{p}{p-1}} \,dx\right)^{\frac{p-1}{p}}
    \end{eqnarray*}
    where we have applied H\"older's inequality for $p> 1$ and used that $\frac{1}{p}+\frac{p-1}{p}=1$. Now observe that by Theorem \ref{THM:zero} for $n=3$ the norm $\||\psi||x|\|_{L^{kp}}$ is finite for $kp=6$. Notice also that for $\alpha>3+k/2$ and for $p=6/k$, with $k \in (0,6)$ we have 
    \[
    k-\alpha<k/2-3<0 \quad {\rm and}\quad \frac{p}{p-1}=\frac{6}{6-k}>0
    \]
    which implies that
    \[
    (k-\alpha)\frac{p}{p-1}<(k/2-3)\left( \frac{6}{6-k}\right)=-3\,.
    \]
    By changing to polar coordinates  the integral $\int_{\R^3}|x|^{-\rho}\,dx<\infty$, for $\rho>0$, is finite if $\rho>3$. Hence by the above the integral  
    \[
    \int_{\B}|x|^{(k-\alpha)\frac{p}{p-1}} \,dx
    \]
    is finite, and the proof is complete.
\end{proof}

\begin{rem}\label{REM:comp}
    Corollary \ref{COR:zero1} generalises \cite[Theorem 3]{BES08}, since the latter is a special case of Corollary \ref{COR:zero1} for the range $k \in [1,10/3)$ and for $\alpha=6$. Moreover, let us point out that in Corollary \ref{COR:zero1} we also do not require the regularity assumption for the potential $Q$, i.e., we only assume that $Q \in L^3(\R^3, \mathbb{C}_4)$, compared to additional regularity required in \cite[Theorem 3]{BES08}. 
\end{rem}

We will now show that Corollary \ref{COR:zero1} actually improves also the following theorem as in \cite[Theorem 4]{BES08}.
\begin{thm}\label{THM.BSE}
Suppose that $\psi \in L^2(\B,\mathbb{C}_4)$ is the solution to the equation \eqref{star}, for some $Q$ such that $Q\in L^3(\R^3, \mathbb{C}_4)$ and $Q(x)=O(|x|^{-1})$ in $\B$. Then we have 
\begin{equation}\label{EQ:rem.zero}
    \int_{\B}|\psi(x)|^{s} |x|^{(2+t)s-6}\,dx<\infty\,,
\end{equation}
    for any $0<t<11/10$ and $s\in [1,4/3)$.
\end{thm}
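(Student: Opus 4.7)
The plan is to deduce Theorem \ref{THM.BSE} as a direct corollary of Corollary \ref{COR:zero1} by a simple reparametrisation. Matching exponents in the two integrands, I set $k=s$ and choose $\alpha$ so that $2k-\alpha = (2+t)s-6$, which forces $\alpha = 6-ts$. With this choice, the integrand $|\psi(x)|^k |x|^{2k-\alpha}$ appearing in Corollary \ref{COR:zero1} becomes precisely $|\psi(x)|^s |x|^{(2+t)s-6}$, so it suffices to verify that the hypotheses $0<k<6$ and $\alpha>3+k/2$ of that corollary are met on the ranges $s\in[1,4/3)$ and $t\in(0,11/10)$.

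The first condition is immediate, since $s\in[1,4/3)\subset(0,6)$. For the second, the inequality $\alpha>3+k/2$ reads $6-ts>3+s/2$, i.e.\ $s(t+\tfrac12)<3$. On the prescribed ranges,
$$
s\left(t+\tfrac{1}{2}\right) < \tfrac{4}{3}\left(\tfrac{11}{10}+\tfrac{1}{2}\right) = \tfrac{4}{3}\cdot\tfrac{8}{5} = \tfrac{32}{15} < 3,
$$
so the constraint is satisfied with room to spare. Applying Corollary \ref{COR:zero1} with the above $k$ and $\alpha$ then yields $\int_{\B}|\psi(x)|^s |x|^{(2+t)s-6}\,dx<\infty$, which is the claim.

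I do not expect any substantive obstacle: the only real work is the parameter book-keeping above. It is worth noting, in line with Remark \ref{REM:comp}, that this argument actually covers the full region $\{(s,t):0<s<6,\,0<t,\,s(t+\tfrac12)<3\}$, which strictly contains the rectangle $s\in[1,4/3)$, $t\in(0,11/10)$ of Theorem \ref{THM.BSE}; thus the present approach recovers and improves the result of \cite[Theorem 4]{BES08} while dispensing with the regularity assumption on $Q$, and no further argument beyond a direct appeal to Corollary \ref{COR:zero1} is needed.
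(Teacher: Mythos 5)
Your proposal is correct and matches the paper's own treatment: the paper presents Theorem \ref{THM.BSE} as a cited result (\cite[Theorem 4]{BES08}) and, in the remark that immediately follows, performs exactly the same reparametrisation $k=s$, $\alpha=6-ts$ to show that Corollary \ref{COR:zero1} subsumes and strictly improves it. Your verification that $0<k<6$ and $\alpha=6-ts>3+k/2$ (equivalently $s(t+\tfrac12)<3$) holds on the stated rectangle is accurate, so nothing further is needed.
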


\begin{rem}
    To compare the result of Theorem \ref{THM.BSE} with Corollary  \ref{COR:zero1}, we first notice that 
    \[
      \int_{\B}|\psi(x)|^{s} |x|^{(2+t)s-6}\,dx = \int_{B_{1}^{c}} |\psi(x)|^k|x|^{2k-\alpha}\,dx\,,
    \]
     where we have set $k=s$ and $\alpha=-tk+6$. By writing the integral \eqref{EQ:rem.zero} in this form we see that by Theorem \ref{THM.BSE} the smallest $\alpha$ that allows for convergence of \eqref{EQ:rem.zero} is $\alpha=6-\frac{11}{10}k$. On the other hand by Corollary \ref{COR:zero1} the smallest such $\alpha$ is equal to $3+k/2$. Now, comparing $6-\frac{11}{10}k$ with $3+k/2$, we can easily verify that for all $k\in [1,4/3)$ we have 
     \[
     6-\frac{11}{10}k>3+k/2\,,
     \]
     which in turn implies that Corollary \ref{COR:zero1} improves the result of Theorem \ref{THM.BSE} that appeared in \cite{BES08}. Moreover, as pointed out in Remark \ref{REM:comp} where Corollary \ref{COR:zero1} is compared with \cite[Theorem 3]{BES08}, the assumptions on the regularity of the potential $Q$ restrict even more the conditions under which we have the convergence of \eqref{EQ:rem.zero}, and such assumptions are not required in Corollary \ref{COR:zero1}.
\end{rem}
\section*{Acknowledgment} There is no conflict of interests. The manuscript has no associated data.

\end{document}